\newtheorem{thm}{Theorem}[section]
\newtheorem{pro}{Proposition}[section]
\newtheorem{defin}{Definition}[section]
 \newcommand{\R}{\mathbb{R}}
  \newcommand{\card}{{\rm{card}}}
\begin{document}

\title{Alternance Theorems and Chebyshev Splines Approximation}

\author{Jean-Pierre Crouzeix \thanks{Universit\'e Clermont Auvergne, Clermont Ferrand,
France,  jp.crouzeix@isima.fr, 24 Avenue des Landais, 63170 Aubière, France, Phone: +33 4 73 40 63 63}
\and Nadezda Sukhorukova \thanks{Corresponding author, Swinburne University of Technology, Melbourne, Australia 
              and  Federation University Australia, nsukhorukova@swin.edu.au, Swinburne University of Technology
PO Box 218
Hawthorn, Victoria, 3122
Australia, Phone: +61 3 9214 8455
Fax: +61 3 9214 8264}
               \and Julien Ugon \thanks{Deakin University, Melbourne, Australia and Federation University Australia, Ballarat, Australia,  julien.ugon@deakin.edu.au, 221 Burwood Highway,
Burwood VIC 3125
Australia,  Phone
+61 3 9244 6100}}


\maketitle
 
 \begin{abstract}
 One of  the purposes in  this  paper  is to provide a better understanding of  the alternance property  which occurs  in Chebyshev polynomial approximation  and piecewise polynomial approximation problems. In the first part of this paper, we propose an original approach to obtain new proofs of the well known necessary and sufficient optimality conditions. There are two main advantages of this approach. First of all, the proofs are much  simpler and easier to understand than the existing proofs. Second, these proofs are constructive and therefore they lead  to alternative-based    algorithms that can be considered as Remez-type approximation  algorithms. In the second part of this paper, we develop new local optimality conditions for free knot polynomial spline approximation. The  proofs for free knot approximation are relying on the techniques developed in the first part of this paper. 
 
\end{abstract}

\noindent \textbf{Keywords:} Chebyshev Approximation, Polynomial Splines, Fixed and Free knots.\\
\noindent \textbf{AMS classification:} 49J52, 90C26, 41A15, 41A50.

\section{Introduction}\label{sec:one}
For a given  continuous function $f:[a,b]\rightarrow \R$, the Chebyshev polynomial approximation problem is   
$$ \min_{\pi \in \Pi_n}\,\left[\, \| \pi -f\|=\sup_{t\in [a,b]}|\,\pi(t)-f(t)\,| \,\right ],\eqno{(cpa)}$$
where $\Pi_n$ denotes the set of   polynomial functions  of degree  less or equal to~$n$.  
It is known that this problem has a unique optimal solution which is characterized by the existence of $\varepsilon \in \{ -1,1\}$ and  $n+2$ points $ t_i$   such that 
$$a\leq t_0<t_1<\cdots t_n<t_{n+1}\leq b,  \quad \|\pi-f\| =\varepsilon (-1)^i[\pi(t_i)-f(t_i)] \;\;\forall \,i .$$
It can be solved by the celebrated Remez algorithm  (1959) \cite{rem}.

This problem can be also formulated as
$$\min _{(a,\lambda )\in \R^{n+2}}\left [ \,  \lambda   \,:\, f(t)-\sum_{i=0}^na_i t^i \leq \lambda,  \quad  \sum_{i=0}^na_i t^i -f(t) \leq \lambda \;\;  \forall \, t\in [a,b] \, \right] \eqno{(lsp)}$$
and therefore it belongs to the class of  linear semi-infinite programs.  One observes that the number  of alternating points $t_i$ corresponds  to the dimension of the space of  primal variables in $(lsp)$. The  exchange rules in the Remez algorithm  roughly  correspond   to the leaving/entering rules in the simplex algorithm running over the corresponding dual problem. However, the linear semi-infinite programming theory does not explain the alternance property which is due, as shown in this paper,  both to the continuity of $f-\pi$  and the structure of $\Pi_n$. 

Alternance conditions also appear in Chebyshev spline  approximation where $f$ is approximated by a continuous piecewise polynomial function~\cite{num1,shu,Su1}.  When the knots (points that connect polynomial pieces) are fixed the problem is no more linear but convex, when  the knots are not known, the problem is no more convex and therefore very hard to solve.  There have been several attempts to extend the results to the case of free knots polynomial spline approximation~\cite{num1,Su2,SuU2017}. The most advanced results have been obtained in~\cite{SuU2017}, where the most accurate necessary optimality conditions are obtained. Theses results are equivalent to Demyanov-Rubinov stationarity~\cite{dr1,dr2} and characterise local optimality.

In this paper, we consider exactly the same problems as the ones recently investigated by Sukhorukova and Ugon~\cite{SuU2017} and Crouzeix et al. \cite{Su4}. However, our approach and techniques are very different in their essence. The main advantages of our approach is that the proofs are easier to understand and, most importantly, the proofs are constructive. The goal of this study is also to  enhance the comprehension of   the alternance property in Chebyshev piecewise polynomial approximation problems.

In section~\ref{sec:two}, we introduce a very general   alternance result which holds for any continuous function. In the same section we also introduce a generalisation of the notion of alternating ($\beta$-alternating) which is more suitable for computational purposes. This result is applied in section~\ref{sec:three}  to the  Chebyshev (uniform) polynomial approximation and provides an alternative proof of the existence, unicity and characterization  of the optimal solution. The proof is very simple, constructive and therefore it gives rise to alternative algorithms   with  the celebrated Remez algorithm, developed for polynomial approximation. Next, the   results  of sections~\ref{sec:two}~and~\ref{sec:three} is also successfully applied in section~\ref{sec:four}  to the  fixed knots spline approximation problem. 
 
Finally, section~\ref{sec:five} treats of the free knots spline problems. The problem is no more convex so that only local optimality conditions can be obtained.

\section{Some general results}\label{sec:two}

Through the paper,  given $a,b\in \R$  with $a<b$, the norm of $f:[a,b]\rightarrow \R $ is defined by
 $$\|f\|=\max_{t\in [a,b]} | f(t)|.$$ 

\begin{thm}\label{thm:alternance}Let $f:[a,b]\rightarrow \R $ be  continuous and such that $M:= \max \,[\, f(t): a \leq t \leq b\, ]=- \min\, [\,f(t): a \leq t \leq b\,]>0$. 
\begin{enumerate}
\item
 There exist $k\geq 1$ integer, $\varepsilon  \in \{-1,1\}$,  and  $\{(t^-_i,t^+_i)\}_{i=0}^{i=k}\subset [a,b]^2$ such that 
$$a \leq t^-_0\leq  t^+_0<t^-_1\leq  t^+_1< t^-_2\leq  t^+_2<\cdots <t^-_{k}\leq    t^+_{k}\leq b,$$
$$\varepsilon M=(-1)^i f(t^-_i)= (-1)^{i}f(t^+_i), \; i=0,1, \cdots , k,   $$
 $$-M<\varepsilon  (-1)^{i}f(t)\leq M \textrm{ if } \;\;  t\in   [t^-_i, t^+_i ]  \textrm{ for some } i,$$
$$ |f(t)| <M\textrm{ for all other  } t\in [a,b].$$
The quantities $k, \varepsilon , t^-_i,t^+_i$ are uniquely defined.
\item \(\|f-p\| \geq \|f\|\) for any polynomial function \(p\) of degree at most \(k-1\). 
\item 
 Let us consider the polynomial function  of degree $k$  $$\gamma  (t)=  \varepsilon \prod _{i=0}^{k-1}\, (\xi_i -t),$$
where, for $i=0,1, \cdots , k-1$,  $\xi_i$ is arbitrarily chosen in $(t^+_i, t^-_{i+1})$.
Then, for $\lambda >0$ small  enough, it holds $\|f-\lambda \gamma  \| <\|f\| $.
\end{enumerate}
 \end{thm}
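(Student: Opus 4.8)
The plan is to show directly that for $\lambda>0$ small enough the continuous function $f-\lambda\gamma$ satisfies the strict pointwise bound $|f(t)-\lambda\gamma(t)|<M$ for every $t\in[a,b]$; since $[a,b]$ is compact and $f-\lambda\gamma$ is continuous, this pointwise inequality yields $\|f-\lambda\gamma\|<M=\|f\|$ at once.

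First I would pin down the sign of $\gamma$ near the plateaus $[t^-_i,t^+_i]$ produced by part 1. Adopting the conventions $\xi_{-1}=-\infty$ and $\xi_k=+\infty$, the factorisation $\gamma(t)=\varepsilon\prod_{j=0}^{k-1}(\xi_j-t)$ shows that on the interval $(\xi_{i-1},\xi_i)$ exactly $i$ of the factors are negative, so $\varepsilon(-1)^i\gamma(t)>0$ throughout that interval. Since $\xi_{i-1}<t^-_i\le t^+_i<\xi_i$, the whole plateau lies in $(\xi_{i-1},\xi_i)$, and I can choose $\eta>0$ small enough (compatible with the finitely many gaps $\xi_i-t^+_i$ and $t^-_{i+1}-\xi_i$) that the enlarged compact sets $Q_i:=[t^-_i-\eta,t^+_i+\eta]\cap[a,b]$ are pairwise disjoint and each is contained in $(\xi_{i-1},\xi_i)$. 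Then $\varepsilon(-1)^i\gamma\ge c_i>0$ on the compact set $Q_i$. Moreover, because each plateau $[t^-_j,t^+_j]$ is itself contained in its interval $(\xi_{j-1},\xi_j)$, the set $Q_i$ meets no plateau other than $[t^-_i,t^+_i]$, so part 1 together with the bound $|f|<M$ off the plateaus gives $-M<\varepsilon(-1)^if(t)\le M$ for all $t\in Q_i$; by compactness $\varepsilon(-1)^if\ge -M+\delta_i$ on $Q_i$ for some $\delta_i>0$.

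Next I would split $[a,b]$ into $A:=\bigcup_i Q_i$ and the compact set $B:=[a,b]\setminus\bigcup_i(t^-_i-\eta,t^+_i+\eta)$, which together cover $[a,b]$. For $t\in Q_i$ one has
$$\varepsilon(-1)^i\bigl(f(t)-\lambda\gamma(t)\bigr)=\varepsilon(-1)^if(t)-\lambda\,\varepsilon(-1)^i\gamma(t)\le M-\lambda c_i<M,$$
and the same quantity is $\ge -M+\delta_i-\lambda\|\gamma\|>-M$ as soon as $\lambda<\delta_i/\|\gamma\|$; hence $|f-\lambda\gamma|<M$ on $A$. On $B$ no plateau is met, so $|f|<M$ there and, $B$ being compact, $|f|\le M-\rho$ for some $\rho>0$, giving $|f-\lambda\gamma|\le M-\rho+\lambda\|\gamma\|<M$ once $\lambda<\rho/\|\gamma\|$. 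Taking any positive $\lambda$ below $\min_i\delta_i/\|\gamma\|$ and $\rho/\|\gamma\|$ completes the proof.

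The point that makes the estimate non-trivial, and the step I expect to be the main obstacle, is the behaviour of $f$ near the plateau endpoints $t^\pm_i$: there $|f|\to M$, so there is no uniform gap between $|f|$ and $M$ off the plateaus and a crude perturbation bound cannot work. The resolution is precisely the sign information collected in the first step: on the neighbourhood $Q_i$ of a plateau the subtracted term $-\lambda\,\varepsilon(-1)^i\gamma(t)$ is strictly negative and bounded away from $0$, and it is this term — not any slack in $f$ — that pushes $\varepsilon(-1)^i(f-\lambda\gamma)$ strictly below $M$, while the only place a lower-bound violation could arise is inside $Q_i$, where it is ruled out by the strict inequality $\varepsilon(-1)^if>-M$ from part 1. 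I would also check separately the boundary cases $i=0$ and $i=k$, where one neighbouring $\xi$ is absent; they are covered by the same formulas under the infinite conventions.
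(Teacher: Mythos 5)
Your proposal addresses only item 3 of the theorem, and it explicitly takes item 1 as given (``the plateaus produced by part 1''); items 1 and 2 are never proved. Item 1 is the substantive core of the statement: one must construct the integer $k$, the sign $\varepsilon$ and the points $t^-_i\leq t^+_i$, show the construction terminates after finitely many steps (the paper does this via Algorithm~\ref{algo:construction}, which locates successive crossings of the levels $\pm M$, finiteness following from a continuity/convergence contradiction), and establish uniqueness of $k,\varepsilon,t^\pm_i$. Item 2 is also missing: the paper obtains it by noting that if $\|f-p\|<\|f\|$ for some $p\in\Pi_{k-1}$, then $p$ would alternate in sign at the $k+1$ points $t^-_0,\dots,t^-_k$, hence would have at least $k$ roots, contradicting its degree. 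Without these two items your text is not a proof of the theorem as stated.

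The part you do give, item 3, is correct, and in fact is slightly more careful than the paper's treatment at this endpoint. The paper proves the analogous claim for the $\beta$-relaxed version (Theorem~\ref{thm:num}) via the bounds \eqref{eq:pl1} and \eqref{eq:pl2}, which exploit the uniform gap $|f|<\beta M$ off the plateaus and hence yield the strict decrease immediately only when $\beta<1$; at $\beta=1$ (the present theorem) there is no uniform gap near the plateau endpoints, which is exactly the obstacle you identify. Your device of enlarging each plateau to a compact neighbourhood $Q_i\subset(\xi_{i-1},\xi_i)$ on which $\varepsilon(-1)^i\gamma\geq c_i>0$ and $\varepsilon(-1)^i f\geq -M+\delta_i$, combined with compactness of the complement $B$ to get $|f|\leq M-\rho$ there, closes that issue cleanly. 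So the perturbation estimate is sound; what is missing is the construction, finiteness and uniqueness of the alternance structure and the polynomial lower bound, i.e.\ most of the theorem.
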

 
Below we will give a proof for a more general version of this theorem, which is better adapted for numerical purposes. Indeed, except  for some particular
functions, it is quite unrealistic to consider that the maximum of $|f(x)|$
can be reached in more than three points.

\begin{thm}\label{thm:num} Let $f:[a,b]\rightarrow \R $ be  continuous and such that 
$$M:= \max \,[\, f(t): a \leq t \leq b\, ]=- \min\, [\,f(t): a \leq t \leq b\,]>0.$$ 
Let $\beta\in (0,1]$.
\begin{enumerate}
\item  There exist $k (\beta) \geq 1 $ integer, $\varepsilon  (\beta )\in \{-1,1\}$,
$t ^-_i(\beta),\,t ^+_i(\beta) \in \R $ for $i=0,\cdots, k(\beta)$ 
such that 
$$a\leq t ^-_0(\beta)\leq t ^+_0(\beta),\quad t ^-_{k(\beta )}(\beta)\leq t ^+_{k(\beta )}(\beta)\leq b,$$
$$t ^-_i(\beta)\leq t ^+_i (\beta) <t ^-_{i+1}(\beta) \leq t ^+_{i+1}(\beta), \; i=0,1, \cdots , k(\beta )-1,$$
$$-\beta M<\varepsilon (\beta ) (-1)^{i}f(t)\leq M \;\; \forall \,  t\in   [t ^-_{i}(\beta),t ^+_{i}(\beta)] ,\; \; \forall \, i ,$$
$$  \varepsilon (\beta ) (-1)^{i}f(t^+_i (\beta ))= \varepsilon (\beta ) (-1)^{i+1}f(t^-_{i+1} (\beta ))=\beta  M, \quad   i =0, \cdots, k(\beta)-1,$$
$$  \varepsilon (\beta ) f(t^-_0 (\beta ))\geq \beta M, \quad \varepsilon (\beta ) (-1)^{k(\beta )}f(t^+_{k(\beta)} (\beta ))\geq \beta  M, $$
$$|f(t)|<\beta M \textrm{ for all other }  t\in [a,b].$$
 The quantities $\varepsilon (\beta), k(\beta),  t ^-_i(\beta ), t ^+_i(\beta ) $ are uniquely defined.
 \item \(\|f-p\| \geq \beta\|f\|\) for any polynomial function \(p\) of degree at most \(k(\beta)-1\).
 \item  Let us consider the function  
 $$\gamma _\beta  (t)=  \varepsilon  (\beta )\prod _{i=0}^{k(\beta )-1}\, (\xi_i(\beta )-t) ,$$
 where, for $i=0,1, \cdots , k(\beta)-1$,  $\xi_i$ is arbitrarily chosen in $(t^+_i(\beta ), t^-_{i+1}(\beta ))$.
 Then, $\|f-\lambda \,\gamma _\beta  \| < \|f\|$ for a suitably chosen~$\lambda >0$. 
\end{enumerate}
 \end{thm}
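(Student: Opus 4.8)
The plan is to prove Theorem~\ref{thm:num} directly; Theorem~\ref{thm:alternance} is then the case $\beta=1$ (recall $\|f\|=M$, so there $\varepsilon f(t_0^-)\ge M$ forces equality, etc.). Throughout put $c:=\beta M\in(0,M]$ and introduce the two compact sets $P:=\{t\in[a,b]:f(t)\ge c\}$ and $N:=\{t\in[a,b]:f(t)\le -c\}$, which are nonempty (as $f$ attains $M$ and $-M$) and disjoint (as $c>0$). For Part~1 I would set $t_0^-:=\min(P\cup N)$ and $\varepsilon:=\operatorname{sign}f(t_0^-)$, and then build the $t_i^\pm$ greedily from left to right: writing $g_i:=\varepsilon(-1)^i f$ (so $g_i\le M$ always and $g_{i+1}=-g_i$), once $t_i^-$ is fixed with $g_i(t_i^-)\ge c$ I take the connected component $C_i$ of the open set $\{g_i>-c\}$ containing $t_i^-$, put $t_i^+:=\max\{t\in C_i:g_i(t)\ge c\}$, and, if $b\notin C_i$, let $t_{i+1}^-$ be the right endpoint of $C_i$, where $g_i=-c$ and hence $g_{i+1}=c$; if $b\in C_i$ the process stops and $k:=i$. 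Reading off these definitions gives every assertion of Part~1 except finiteness: on $C_i\supseteq[t_i^-,t_i^+]$ one has $g_i>-c$; beyond $t_i^+$ inside $C_i$ one has $|g_i|<c$; before $t_0^-$ one has $|f|<c$; and $g_i(t_i^+)=c$ exactly, by maximality. Moreover $[t_i^-,t_i^+]\subseteq\{g_i>-c\}$ is disjoint from the opposite one of $P,N$, which both forces $k\ge1$ (the opposite set is nonempty and must lie beyond $t_0^+$) and shows each gap $(t_i^+,t_{i+1}^-)$ carries $|f|<c$.

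The one step that is not bookkeeping is that the construction terminates. I would argue by contradiction: an infinite strictly increasing sequence $t_0^-<t_1^-<\dots$ in $[a,b]$ converges to some $t^*$, but $g_i(t_i^-)=c$ with the parity of $i$ alternating means $f(t_i^-)$ takes both values $+c$ and $-c$ along a sequence converging to $t^*$, contradicting continuity of $f$ at $t^*$ --- this is precisely where continuity is used, as the introduction emphasises. Uniqueness of $\varepsilon,k,t_i^\pm$ then follows because the listed conditions force $t_0^-$ to be the smallest point of $P\cup N$, force $\varepsilon=\operatorname{sign}f(t_0^-)$, and thereafter pin down each $t_i^\pm$ by exactly the min/max-at-level-$c$ characterisation used to build them.

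For Part~2, take $u_i:=t_i^+$ for $i=0,\dots,k$; then $u_0<u_1<\dots<u_k$ and $\varepsilon(-1)^i f(u_i)\ge c$ for every $i$. If a polynomial $p$ with $\deg p\le k-1$ had $\|f-p\|<c=\beta\|f\|$, then from $|f(u_i)-p(u_i)|<c$ we would get $\varepsilon(-1)^i p(u_i)=\varepsilon(-1)^i f(u_i)-\varepsilon(-1)^i\bigl(f(u_i)-p(u_i)\bigr)>0$ for every $i$, so $p$ would change sign between consecutive $u_i$ and hence have at least $k$ distinct roots in $(u_0,u_k)$ while being a nonzero polynomial of degree at most $k-1$ --- impossible. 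Hence $\|f-p\|\ge\beta\|f\|$.

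For Part~3, first note the roots $\xi_0<\dots<\xi_{k-1}$ of $\gamma_\beta$ all lie in the gaps $(t_i^+,t_{i+1}^-)$, so $\gamma_\beta$ has no zero on the compact set $K:=\bigcup_{i=0}^k[t_i^-,t_i^+]$, and counting sign factors gives $\operatorname{sign}\gamma_\beta(t)=\varepsilon(-1)^i$ on $[t_i^-,t_i^+]$. The set $\{t:|f(t)|=M\}$ is contained in $K$ (elsewhere $|f|<\beta M\le M$), and at a point $t\in[t_i^-,t_i^+]$ with $|f(t)|=M$ the inequality $\varepsilon(-1)^i f(t)>-\beta M\ge -M$ rules out $f(t)=-\varepsilon(-1)^iM$, so $f(t)=\varepsilon(-1)^iM$; thus $f(t)$ and $\gamma_\beta(t)$ have the same sign there. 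A compactness argument then finishes: if $\|f-\lambda_n\gamma_\beta\|\ge M$ for some $\lambda_n\downarrow0$, choose maximisers $t_n\to t^*$; since $f(t_n)-\lambda_n\gamma_\beta(t_n)\to f(t^*)$ we get $|f(t^*)|=M$, so in a neighbourhood of $t^*$ both $f$ and $\gamma_\beta$ keep the fixed sign $\varepsilon(-1)^i$ while $f(t_n)\to\pm M$ and $\lambda_n\gamma_\beta(t_n)\to0$, which forces $|f(t_n)-\lambda_n\gamma_\beta(t_n)|<M$ for large $n$ --- a contradiction. Hence $\|f-\lambda\gamma_\beta\|<\|f\|$ for every sufficiently small $\lambda>0$, which is stronger than the stated claim. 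The real work is the construction and uniqueness in Part~1; Parts~2 and~3 are short once it is in place.
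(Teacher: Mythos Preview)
Your proof is correct. Parts~1 and~2 follow essentially the paper's route: the same greedy left-to-right construction (the paper packages it as Algorithm~\ref{algo:construction} with explicit $\min/\max$ over level sets, you phrase it via connected components of $\{g_i>-c\}$, but these are the same objects), the identical continuity-based termination argument, and the same ``alternating signs force too many roots for the degree'' contradiction in Part~2 (the paper evaluates at the $t_i^-$, you at the $t_i^+$; either works).

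Part~3 is where you genuinely diverge. The paper does not argue by compactness and contradiction; instead it introduces
\[
m_-(\beta)=\min\bigl\{|\gamma_\beta(t)|:t\in\textstyle\bigcup_i[t_i^-,t_i^+]\bigr\},\qquad m_+(\beta)=\max_{[a,b]}|\gamma_\beta|,\qquad \rho(\beta)=\frac{m_-}{m_+},
\]
proves the two-sided estimate $\|f-\lambda\gamma_\beta\|\le\max\{\|f\|-\lambda m_-(\beta),\ \beta\|f\|+\lambda m_+(\beta)\}$, and then exhibits the explicit step $\bar\lambda=\|f\|(1-\beta)/(m_-+m_+)$ giving the quantitative reduction $\|f-\bar\lambda\gamma_\beta\|\le\frac{1+\beta\rho(\beta)}{1+\rho(\beta)}\|f\|<\|f\|$. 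Your soft argument is shorter and suffices for the statement as written (indeed you get the slightly stronger conclusion ``for all sufficiently small $\lambda>0$''), but the paper's explicit rate is not incidental: it feeds directly into Theorem~\ref{rate} (the reduction bound in terms of an inverse modulus of continuity of $f$), which is then the engine for the finite-termination proofs of Algorithms~\ref{alg:2knots} and~\ref{algo:pplus2}. So your route buys brevity; the paper's buys the quantitative control the later sections rely on.
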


 \begin{proof}1)  We construct the sequence \(t_0^-(\beta),\ldots,t_{k(\beta)}^+(\beta)\) using Algorithm~\ref{algo:construction}.

   \begin{algorithm}[H]
         \caption{Construction of the alternating sequence} \label{algo:construction}
   \nlset{0} \textbf{Initialisation} Define
         \[t_0^-(\beta) = \min_{t\in [a,b]} \{t: |f(t)| \geq \beta M\} \text{ and } \varepsilon(\beta) = \tfrac{f(t_0^-(\beta))}{\vert f(t_0^-(\beta))\vert}.\]  By construction,
         \[ | f(t)|< \beta M \leq \varepsilon (\beta) f(t_0^-(\beta)) \;  \textrm{ if } \;a\leq t< t^-_0(\beta).   \]
         Set \(i:=1\).\;
   
  \While{\( \{t: (-1)^{i-1} \varepsilon (\beta )f(t) \leq  -\beta M\} \neq \emptyset\)}{
             \nlset{i} \text{Let } \[\left\{\begin{array}{ll} t_i^-(\beta) &= \min_{t\in [t_{i-1}^-(\beta),b]} \{t: (-1)^{i-1} \varepsilon (\beta )f(t) \leq  -\beta M\}\text{ and } \\
             t_{i-1}^+(\beta) &= \max_{t\in [t_{i-1}^-(\beta),t_i^-(\beta)]} \{t: (-1)^{i-1} \varepsilon (\beta )f(t) \geq  \beta M\}.\end{array}\right.\]

 Such $t^+_{i-1}(\beta),t^-_{i}(\beta) $ exist and $t^-_{i-1}(\beta)\leq t^+_{i-1}(\beta)<  t^-_{i}(\beta)\leq b$.
By construction, 
\[ \begin{array}{l}
        \beta   M=(-1)^{i} \varepsilon (\beta ) f(t^-_{i}(\beta))=   (-1)^{i-1} \varepsilon (\beta )f(t^+_{i-1}(\beta)),\\
-\beta M<\varepsilon (\beta ) (-1)^{i-1} f(t)\leq M  \textrm{ if } \;t^-_{i-1}(\beta)\leq t\leq t^+_{i-1}(\beta),\\
| f(t) |<  \beta M  \; \textrm{ if } \;t^+_{i-1}(\beta)< t<t^-_{i}(\beta).
\end{array} \]
}
\nlset{k} Set \(k(\beta)=i\). \\Let 
\(\displaystyle t_{k(\beta)}^+(\beta) = \max_{t\in [t_{k-1}^-(\beta),b]} \{t:  (-1)^{k(\beta)} \varepsilon(\beta)f(t) \geq  \beta M\}\).\\
By construction, 
\[ \begin{array}{l}
\beta M=(-1)^{k(\beta)} \varepsilon (\beta )f(t^-_{k(\beta)}(\beta))\leq (-1)^{k(\beta)} \varepsilon (\beta )f(t^+_{k(\beta)}(\beta)),\\
-\beta M<\varepsilon (\beta) (-1)^{k(\beta )} f(t)\leq M  \textrm{ if } \;t^-_{k(\beta)}(\beta)\leq t\leq t^+_{k(\beta)}(\beta), \\
| f(t) |<  \beta M  \; \textrm{ if } \;t^+_{k(\beta)}(\beta)< t\leq b.
\end{array} \]
     \end{algorithm}

 It remains to prove that the construction ends after a finite number of steps. We proceed by contradiction. If not, we have built a strictly increasing sequence $\{t^-_i(\beta)\}_i\subset [a,b]$. This sequence converges to some $\bar t \in [a,b]$. Since $f$ is continuous, $f(t^-_i(\beta))$ converges  to $f(\bar t)$. But $f(t^-_i(\beta))$ equals  either $\beta M$ or $-\beta M$ depending on the parity of $i$. This is not possible.

 2)  Assume  that  \(p \in \Pi_{k(\beta)-1}\) exists  such that \(\|f-p\|< \beta \, \|f\| =\beta M\). Then, for each $i=0,\cdots, k(\beta)$
 $$ \beta M >  \varepsilon (\beta )(-1)^i [f(t_{i}^- (\beta )-p(t_{i}^- (\beta ))] \geq  
 \beta M-\varepsilon(\beta) (-1)^i p(t_i^-(\beta))  .$$ 
 Hence, \( p(t_i^-(\beta ))p(t_{i+1}^-)(\beta ))  < 0\) for $i=0,\cdots , k(\beta )-1$ which is not possible because the degree of the polynomial function $p$.

3) It remains to choose $\lambda $ in such a way  that $\|f-\lambda \gamma \|< \| f \|$. The stronger decrease corresponds to the optimal solution 
$\lambda _{opt}$ of the minimisation problem
\begin{equation}\label{pl}
  \mu_{opt} = \min_{\lambda \geq 0}\, \max_{t\in [a,b]}[ \, |f(t)-\lambda \gamma (t)| \,] =    
  \min_{\lambda , \mu}   \,\left [\, \mu  :  \left.\begin{array}{c}f(t)-\lambda \gamma (t)\leq \mu , \\-f(t)+\lambda \gamma (t)\leq \mu,
   \\ \forall \, t\in [a,b].\end{array}\right.     \,\right ].
\end{equation}
The second formulation in problem~(\ref{pl}) is a linear programming problem with only two variables but with an infinite number of   constraints, hence   $\lambda _{opt} $ and $\mu_{opt} $  cannot be easily  obtained. In order to obtain  upper-bounds of $\mu_{opt} $,   let us introduce 
$$m_-(\beta)=\min_t \left [\,|\gamma (t) | \,:\, t \in [\,t ^-_i(\beta ), t ^+_i(\beta )\,]\;\textrm { for some }i \,\right],$$
 $$m_+(\beta)=\max_t \left[\, | \gamma (t) |\,:\, t \in [\,a,b\,\right]\ \,], \quad \rho (\beta )= m_- (\beta )(m_+(\beta))^{-1}.$$

It follows from the  construction of  the function $\gamma $ that  $0<\rho(\beta) \leq 1$ and   
$m_-(\beta )=\min_i \left [\,\gamma( t ^-_i(\beta )),\gamma ( t ^+_i(\beta ))\,\right ]$.

 Assume that  $t \in [\,t ^-_i(\beta ), t ^+_i(\beta )\,]$ for some $i$. Then $\varepsilon (\beta ) (-1)^i \gamma (t)>0$\\ and  therefore,  for all $\lambda >0$,
\begin{equation}\label{eq:pl1}
-\beta\, \|f\|-\lambda m_+(\beta) < \varepsilon (\beta)  (-1)^i  (f-\lambda \gamma) (t)  \leq \|f\|-\lambda m_-(\beta ).
\end{equation}
Next,  for  the other $t\in [a,b]$,   one has 
\begin{equation}\label{eq:pl2}
-\beta \, \|f\|-\lambda m_-(\beta ) <  (f-\lambda \gamma) (t)  <  \beta \,\|f\|+\lambda m_+(\beta).
\end{equation}
It follows immediately from the inequalities \eqref{eq:pl1}-\eqref{eq:pl2} that for \(\lambda>0\) small enough, \(\|f-\lambda\gamma\|<\|f\|\). Furthermore, for \(\beta\in (0,1)\) and $\lambda >0$
\begin{equation}\label{eq:pl3}
\|f-\lambda \gamma \| \leq \max[\, \|f\|-\lambda m_-(\beta ), \beta \, \|f\|+\lambda m_+(\beta)\,]   .
\end{equation}
In particular,  for $\bar \lambda = \|f\| \,(1-\beta )(m_-(\beta )+m_+(\beta))^{-1}$, 
\begin{equation}\label{rho}
 \mu_{opt}  \leq  \|f-\bar \lambda \gamma \| \leq  \frac{1+\beta \rho (\beta )}{1+\rho (\beta )} \,\|f \|  < \|f\|.
\end{equation}

 These bounds are  very rough:   $\bar \lambda $ is not optimal     and the  inequalities (\ref{eq:pl1}),  (\ref{eq:pl2}) and (\ref{eq:pl3}) correspond to the worst possible cases.  \end{proof}

   \vspace{0.5cm}
   
   \begin{defin}
   Consider a sequence of points
   $$a\leq t_0<t_1<\dots<t_k\leq b,~k>0.$$
   We call this sequence of points a $\beta$-alternating sequence if there exists $\varepsilon=\{1,-1\}$, such that
$$ \varepsilon f(t_0)\geq \beta M, \quad  (-1)^{i}\varepsilon f(t_{i})\geq \beta  M,~i=1,\dots,k.$$   
   \end{defin}

 The next proposition analyses the behaviour of $k(\beta )$ and the points $t_i^-(\beta )$ and $t_i^+(\beta )$ when $\beta \rightarrow 1$. 
\begin{pro}\label{k(beta)}
a) $k(\beta _1) \geq k(\beta _2)$ when $0<\beta _1 < \beta _2\leq 1$.\\ 
b) There is $\hat \beta\in (0,1)$ such that $k(\beta )=k(1)$ for all $\beta \in\,  (\hat \beta, 1]$.\\
c) For all $i$,  $t ^-_i(\beta) \rightarrow t^-_i$ and  $t ^+_i(\beta ) \rightarrow t^+_i$ when $\beta \rightarrow 1$.
\end{pro}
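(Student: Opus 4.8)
The plan is to read everything off Algorithm~\ref{algo:construction}, together with one combinatorial fact about $\beta$-alternating sequences. Call a configuration as in the Definition above a $\beta$-alternating sequence. The key lemma I would prove first is: \emph{$k(\beta)+1$ equals the maximal number of points in a $\beta$-alternating sequence}. One inequality is immediate, since $t^-_0(\beta),t^-_1(\beta),\dots,t^-_{k(\beta)}(\beta)$ is such a sequence (with sign $\varepsilon(\beta)$, by the ``By construction'' identities in the algorithm). For the reverse, let $s_0<\dots<s_m$ be $\beta$-alternating with leading sign $\sigma$; following the algorithm, one shows by induction on the step index that the point selected at step $i$ stays weakly left of $s_i$ if $\sigma=\varepsilon(\beta)$, and weakly left of $s_{i-1}$ if $\sigma=-\varepsilon(\beta)$ (in the latter case $\varepsilon(\beta)f(s_0)\le-\beta M$, so the first while-iteration already uses up $s_0$); either way the while-loop runs at least $m$ times, giving $k(\beta)\ge m$. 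Part~a) is then immediate: for $0<\beta_1<\beta_2\le1$ the output of the algorithm run with $\beta_2$ is a $\beta_2$-alternating sequence of $k(\beta_2)+1$ points, and since lowering the threshold from $\beta_2M$ to $\beta_1M$ only weakens all the defining inequalities it is a fortiori $\beta_1$-alternating, so $k(\beta_1)\ge k(\beta_2)$ by the lemma.

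\textbf{Part b).} By a), $\beta\mapsto k(\beta)$ is non-increasing on $(0,1]$, integer valued and $\ge k(1)\ge1$; hence $\kappa:=\lim_{\beta\to1^-}k(\beta)$ exists and (being integer valued) $k(\beta)=\kappa$ on some interval $(\hat\beta,1)$. Since $\kappa\ge k(1)$ by a), it is enough to show $\kappa\le k(1)$, for which by the lemma it suffices to exhibit a $1$-alternating sequence of $\kappa+1$ points. Pick $\beta_n\uparrow1$ with $k(\beta_n)=\kappa$, and let $s^n_0<\dots<s^n_\kappa$ (sign $\sigma_n$) be the algorithm's output for $\beta_n$; passing to a subsequence, $\sigma_n\equiv\bar\sigma$ and $s^n_j\to\bar s_j$. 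Continuity of $f$ gives $(-1)^j\bar\sigma f(\bar s_j)\ge M$, and since $|f|\le M$ this is an equality; moreover two consecutive $\bar s_j$ cannot coincide, as that would force $f$ to attain both $M$ and $-M$ at one point, impossible because $M>0$. Hence $\bar s_0<\dots<\bar s_\kappa$ is a genuine $1$-alternating sequence of $\kappa+1$ points, so $k(1)\ge\kappa$ and $k(\beta)=k(1)$ on $(\hat\beta,1]$.

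\textbf{Part c).} Write $t^\pm_i:=t^\pm_i(1)$ and $k:=k(1)$; by b) the index ranges coincide for $\beta$ near $1$. By a routine subsequence argument it suffices to prove that, along any $\beta_n\uparrow1$ on which all $t^\pm_j(\beta_n)$ converge and $\varepsilon(\beta_n)$ is constant, the limits are exactly $t^\pm_j$. I would induct along the algorithm. For the first point, any limit $\tilde t$ of $t^-_0(\beta_n)$ satisfies $|f(\tilde t)|\ge M$, hence $\tilde t\ge t^-_0$, while $t^-_0(\beta_n)\le t^-_0$ because $\{|f|\ge\beta_n M\}\supseteq\{|f|\ge M\}$; so $t^-_0(\beta_n)\to t^-_0$, whence $\varepsilon(\beta_n)=\mathrm{sign}\,f(t^-_0(\beta_n))\to\varepsilon$ and the common sign is $\varepsilon$. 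Now assume $t^-_{i-1}(\beta_n)\to t^-_{i-1}$ (and, carried along, $t^-_{i-1}(\beta_n)\le t^-_{i-1}$ eventually). Using the algorithm's defining formula with sign $\varepsilon$: the limit $\tilde t^-_i$ of $t^-_i(\beta_n)$ satisfies $(-1)^{i-1}\varepsilon f(\tilde t^-_i)=-M$ and $\tilde t^-_i\ge t^-_{i-1}$, hence $\tilde t^-_i\ge t^-_i$; conversely, for $n$ large $t^-_i$ (where $(-1)^{i-1}\varepsilon f=-M\le-\beta_nM$) lies in the admissible window $[t^-_{i-1}(\beta_n),b]$ because $t^-_{i-1}(\beta_n)\to t^-_{i-1}<t^-_i$, so $t^-_i(\beta_n)\le t^-_i$; thus $t^-_i(\beta_n)\to t^-_i$. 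The same bracketing over $[t^-_{i-1}(\beta_n),t^-_i(\beta_n)]$ gives $t^+_{i-1}(\beta_n)\to t^+_{i-1}$, and over $[t^-_{k-1}(\beta_n),b]$ it gives $t^+_k(\beta_n)\to t^+_k$.

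\textbf{Where the difficulty lies.} Parts a) and b) are short once the combinatorial lemma is in place. The real work is in c), specifically checking that the fixed value $t^+_{i-1}$ actually sits inside the moving window $[t^-_{i-1}(\beta_n),t^-_i(\beta_n)]$ for all large $n$: the left end needs the bound $t^-_{i-1}(\beta_n)\le t^-_{i-1}\le t^+_{i-1}$ carried through the induction, and the right end needs the \emph{strict} separation $t^+_{i-1}<t^-_i$ from Theorem~\ref{thm:num}(1) combined with $t^-_i(\beta_n)\to t^-_i$. Throughout c) (and in b)) one keeps using that $|f|\le M$ turns limiting inequalities into equalities and that $f$ never attains $M$ and $-M$ at the same point, which is exactly what forces the limiting configuration to remain strictly ordered and of the right length.
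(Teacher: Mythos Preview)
Your proof is correct, but the route differs from the paper's in an interesting way.

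\textbf{Your approach.} You isolate a combinatorial lemma—$k(\beta)+1$ equals the maximal length of a $\beta$-alternating sequence—and use it throughout. For a) this makes the monotonicity a one-liner; for b) you run a compactness argument on the algorithm's output along $\beta_n\uparrow1$, showing the limiting configuration is a genuine $1$-alternating sequence (the key being that consecutive limits cannot collapse since $f$ cannot equal $M$ and $-M$ at the same point), whence $\kappa\le k(1)$. For c) you do a subsequence-plus-induction argument directly from the defining minima/maxima in Algorithm~\ref{algo:construction}.

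\textbf{The paper's approach.} The paper never formulates your lemma. For a) it simply tracks the algorithm step by step to see $t^-_i(\beta_1)\le t^-_i(\beta_2)$. For b) it constructs $\hat\beta$ \emph{explicitly}: it introduces the zeros $r^\pm_i$ of $f$ inside $(t^+_i,t^-_{i+1})$, sets $\alpha$ to be the maximum of $|f|$ over the ``gap'' regions, and takes $\hat\beta=\alpha/M$; it then verifies by direct inspection that for $\beta>\hat\beta$ the algorithm's points land in prescribed subintervals and the step count is exactly $k(1)$. For c) it uses those explicit sandwiching bounds from b) (e.g.\ $r^+_{i-1}<t^-_i(\beta)<t^-_i$) together with monotonicity in $\beta$, then argues by contradiction on the limit.

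\textbf{What each buys.} Your argument is shorter and more conceptual, and the lemma is a nice standalone characterization. The paper's argument is more concrete: it yields an explicit value of $\hat\beta$ and, as a by-product, the monotonicity of $t^\pm_i(\beta)$ in $\beta$ on $(\hat\beta,1)$—both of which feed into the quantitative bounds (Theorem~\ref{rate}) used later for algorithmic convergence. Your compactness argument in b) gives no control on how small $1-\hat\beta$ might be.

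One small remark on your c): the inequality $t^-_{i-1}(\beta_n)\le t^-_{i-1}$ that you ``carry along'' is only established \emph{eventually} in $n$ (since it relies on $t^-_{i-2}(\beta_n)$ already being close enough); this is harmless for a limit statement but worth saying explicitly, as you partly do in your closing paragraph.
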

\begin{proof}a) It is clear that $t ^-_0(\beta _1) \leq t ^-_0(\beta _2)$. Next, $t ^-_1(\beta _1) \leq t ^-_1(\beta _2)$,  \\
$t ^-_i(\beta _1) \leq t ^-_i(\beta _2)$ for $i\geq 2$.
\vspace{0.2cm}

b) Denote by $r^-_i$ and $r^+_i$ respectively  the  smallest and the greatest \\$t\in [t^+_i,t^-_{i+1}]$ such that $f(t)=0$. Then,
$t^+_i<r^-_i \leq r^+_i<t^-_{i+1}]$.
Next, set
 $$\alpha _{-1}=\max_t\,[-\varepsilon f(t)\, :\, a\leq t \leq t^-_0\,],$$
$$\alpha _{i}=\max_t\,[ \,|f(t)| \, :\, r^-_i\leq t \leq r^+_{i}\, ], \;\, i=0,\cdots,  k(1)-1, $$
$$\alpha _{k(1)}=\max_t\,[ \,-\varepsilon (-1)^{k(1)} f(t)\, :\, t^+_{k(1)}\leq t \leq b\, ]. $$
Next, let $\alpha =\max _i[\,\alpha_i\,  ]$.  By construction, $0<\alpha <M$. Take  $\hat  \beta =\alpha M^{-1}$.
Let any  $\beta \in (\hat \beta, 1)$.
\begin{enumerate}
 \item  $ t ^-_0 (\beta )\in [a,t^-_0]$ and  $ t ^-_1(\beta )\notin [a,r^+_0]$ since   $-\beta M< \varepsilon f(t)$ for all $t\in [a,r^+_0]$ and, in case where $a\neq t^-_0$, $\varepsilon f(t^-_0)=M$.

Next, $ t ^-_1(\beta )\in (r^+_0, t^-_1)$ because  $f(r^+_0)=0$, $\varepsilon f(t^-_1)=- M$  and   \\$0> \varepsilon f(t) $ for all $t\in (r^+_0 ,t^{-1}_1)$.

Finally,  $ t ^+_0(\beta )\in (t^+_0, r^-_0)$  because  $\varepsilon f(t^+_0)=M$, $f(r^-_0)=0$  and \\ $ | f(t) | <\beta M$ for all $t\in [r^-_0,  t ^-_1)$.

 \item   
$ t ^-_2(\beta )\notin [t ^-_1(\beta ),r^+_1]$ since   $-\beta M< (-1)^1\varepsilon f(t)$ for all  $t\in [t ^-_1,r^+_1]$. 

Next, $ t ^-_2(\beta )\in (r^+_1, t^-_2)$ because  $f(r^+_1)=0$, $(-1)^2\varepsilon f(t^-_2)=M$  and  \\ $0>(-1)^1 \varepsilon f(t) $ for all $t\in (r^+_0 ,t^{-1}_1)$.

Finally,  $ t ^+_1(\beta )\in (t^+_1, r^-_2)$  because  $(-1)^1\varepsilon f(t^+_1)=M$, $f(r^-_2)=0$  and  $ | f(t) | <\beta M$ for all $t\in [r^-_1,  t ^-_2)$.
\item Proceed similarly for other $i$.

\end{enumerate}
It follows that $k(\beta)=k(1)$ for $\beta \in [\hat \beta , 1]$.

c) Assume that $\hat \beta<\beta_1<\beta_2 <1$. Then, for all $i$,
$$r^+_{i-1}<t ^-_i(\beta _1)<t ^-_i(\beta _2)<t^-_i\leq t^+_i<t ^+_i(\beta _2)<t ^+_i(\beta _2)<r^-_i.$$
Assume, for contradiction, that $t ^-_i(\beta )$ converges to some $\bar t<t^-_i$. The definition of $t^-_i$ implies $ |f(\bar t)|<M$ in contradiction with $f$ continuous and $f(t ^-_i(\beta ))=\beta M$ and $\beta \rightarrow 1$. The other convergencies are treated similarly.
\end{proof}

\vspace{0.3cm}

 Next, let us bring our attention on  the reduction rate   in (\ref{rho}) in case where we take for $\xi_i $ the middle of the interval 
 $[t_i^+(\beta ),t_{i+1}^-(\beta )]$.
 The quantity $m_+(\beta)$ is bounded from above by $[b-a]^{k(\beta)}$. Hence, roughly speaking, 
  a small value of $\rho(\beta ) $ corresponds  for some $i$ to a small value of $m_-(\beta )$ which corresponds 
to a small value of some $\xi_i(\beta )-t_i^-(\beta )$ or $t_{i+1}^+(\beta )-\xi_i(\beta )$ and/or a small value of some $\xi_{i+1}(\beta )-\xi_i(\beta )$. Let us explicit that in terms of  continuity of the function $f$.  

$f$ being  continuous on the compact set $[a,b]$ is uniformly continuous. Hence,  for  all $\delta >0$, there is $\mu >0$ such that   $|f(t)-f(s)| <\delta $ when $ |t-s|<\mu $. This motivates the introduction of the following function 
$$\mu_f (\delta ) =\sup_{s,t,\mu }\,[\,\mu >0\, :\, s,t \in [a,b] \textrm{ and } |t-s|<\mu \Longrightarrow  |f(t)-f(s)| <\delta \,],$$
$$\mu (\delta , f)= \mu_f (\delta ) =\inf_{s,t}\,[\,|t-s|\,:\,  s,t\in [a,b], \; |f(t)-f(s)| \geq \delta \,].$$
This function is in  some way an inverse  modulus of continuity of  $f$.
It is clear that
$$0<\mu_f (\delta_1)  \leq \mu_f (\delta_2 ) \;\; \textrm{ whenever } 0<\delta_1<\delta_2. $$   

 {In case where $f$ is Lipschitz, i.e., if there exists $L$ such that 
 $$\| f(t)-f(s)|\geq L|t-s|$$ for all $s,t$, one has
 $\mu_f (\delta )\leq L^{-1}\delta $.

Let us return to our problem.  By construction of the points $t_i^- (\beta ),  t_i^+ (\beta )$, one has  $|f(t_{i+1}^-(\beta ))-f(t_{i}^+(\beta ))|=2\beta \, \|f\|$ and therefore for all $i$
$$t_{i+1}^- (\beta )- t_{i}^+(\beta ) \geq \mu_f(2\beta \, \|f\|), \quad t_{i+1}^-(\beta ) - \xi_{i}(\beta )\geq  \mu_f(2\beta \, \|f\|)/2, $$
$$\xi _i(\beta )- t_{i}^-(\beta )\geq  \mu_f(2\beta \, \|f\|)/2,\quad \xi_{i+1}(\beta )-\xi_i (\beta ) \geq \mu_f(2\beta \, \|f\|),$$ 
$$(\,t ^+_i(\beta ),t ^-_{i+1}(\beta ))\supset \, (\, \xi_i(\beta )- \mu_f(2\beta \, \|f\|)/2 \, , \,\xi_i(\beta )+\mu_f(2\beta \, \|f\|)/2)).$$
 Furthermore,
$$a\leq  t ^+_0(\beta ) \leq \xi_0(\beta)- \mu_f(2\beta \, \|f\|)/2)), \quad  \xi_{k-1}+\mu_f(2\beta \, \|f\|)/2))\leq t^-_k(\beta )\leq b.   $$

Let us observe that
$$\bigcup_i \, [\,t ^-_i(\beta ),t ^+_i(\beta )\,]  \, \subset \, [\, \bigcup_i \, (\,t ^+_i(\beta ),t ^-_{i+1}(\beta ))\,] ^c \subset \cdots$$
$$\cdots  \subset \, [\, \bigcup_i \, (\, \xi_i(\beta )- \mu_f(2\beta \, \|f\|)/2 \, , \,\xi_i(\beta )+\mu_f(2\beta \, \|f\|)/2)) \,] ^c=T_f(\beta ),$$
where $c$ stands for   complementary set.

 Hence, 
\begin{equation}\label{tfalpha}
m_-(\beta)\geq \inf_t\, [\,  \prod _{i=0}^{k(\beta )-1}\, |t-\xi_i(\beta ) | \, :\, t\in [a,b] \cap T_f(\alpha )\,].
\end{equation}
 
It remains to obtain a lower bound of the product.   Let us  introduce the following function $\Gamma _k$ which does not depend on $f$ and $\beta $. 
 $$\Gamma _k(r)=\inf_{t, \xi_i}\left [ \, \prod _{i=0}^{k-1}\, | t-\xi_i |\, :\, 
 \left.\begin{array}{c}a \leq t\leq b, \\a+r\leq \xi_0\leq \xi_1\leq \cdots \leq\xi_{k-1}+r\leq b, \\   |\xi_{i+1}-\xi_i |\geq 2r, \;   | t-\xi_i |\geq r \;\;\;\forall \,i. \end{array}\right.\right] , \; r>0.$$
 By construction  $0<\Gamma_k (r_1) \leq \Gamma_k(r_2) $ whenever $0<r_1<r_2$. One has necessarily $2kr\leq b-a$. 
 
  $\Gamma_k (r)$ can be explicitly determined. Indeed, 
 \begin{itemize}
\item  $\Gamma_2(r)=r^2 $ is reached for $\xi_0=a+r,\, \xi_1=\xi_0+2r,\,  t=\frac{\xi_0+\xi_1}{2}$.
\item $\Gamma_3(r)=3r^3$ is reached for $\xi_0=a+r,\, \xi_1=\xi_0+2r,\, \xi_2=\xi_1+2r$ and $t=\frac{\xi_0+\xi_1}{2}$.
\item $\Gamma_4(r)=3^2r^4 $ reached for   $\xi_0=a+r$,  $\xi_{i+1}=\xi_i+2r$,  $ i=0,1,2$ and 
$t=\frac{\xi_1+\xi_2}{2}$.
\item $\Gamma_5(r)=3^25r^5 $  reached for   $\xi_0=a+r$,  $\xi_{i+1}=\xi_i+2r$,  $ i=0,1,2,3$ and 
$t=\frac{\xi_1+\xi_2}{2}$.
\end{itemize}
More generally, $\Gamma_k (r)=c_kr^k$ where
$$c_{2q}=1^23^25^5\cdots (2q-1)^2, \quad c_{2q+1}=\frac{1^23^25^5\cdots (2q+1)^2}{2q+1}.$$
Since the logarithmic function is increasing
\begin{gather*}
  \int_1^{2q-1}\ln(t)dt \leq 2[\ln(3)+\ln(5)\cdots +\ln(2q-1)]; \\
 (2q-1)\ln(\frac{2q-1}{e}) +1 \leq \ln[\,3^25^2\cdots (2q-1)^2\,].
 \end{gather*}
 It follows that 
 \begin{gather*}
 \Gamma_{2q}(r)\geq \frac{e^2}{k-1}\, \left[\frac{(k-1)r}{e}\right ]^k\geq  \frac{e}{k}\,\left [\frac{(k-1)r}{e}\right]^k \;\;\textrm{ if } k =2q;\\
 \Gamma_{2q+1}(r)\geq \frac{e}{k}\, \left[\frac{kr}{e}\right]^k\geq  \frac{e}{k}\, \left[\frac{(k-1)r}{e}\right]^k \;\; \textrm{ if } k =2q+1.
  \end{gather*}
  Going back to  (\ref{tfalpha}) we obtain 
$$m_-(\beta )\geq \Gamma_{k(\beta )} (\frac{\mu_f(2\beta \, \|f\|)}{2})\geq 
 \frac{e}{k(\beta) }\, \left[\frac{(k(\beta )-1)\mu_f(2\beta \, \|f\|)}{2e}\right]^{k(\beta )}.
$$ 
Finally,
$$\rho(\beta )=\frac{m_-(\beta)}{m_+(\beta) }\geq  \frac{e}{k(\beta) }\, \left[\frac{(k(\beta )-1)\mu_f(2\beta \, \|f\|)}{2e(b-a)}\right]^{k(\beta )}.$$ 
 
 Next, since the function $\rho \rightarrow (1+\beta \rho) (1+ \rho) ^{-1}$ decreases on $[0,\infty )$, we obtain
 the following theorem which provides  an upper bound  of the reduction rate  in terms of  the degree of continuity of $f$ 
 and the parameter $\beta$.
 \begin{thm} \label{rate}In case where $\xi_i=\frac{1}{2}(t_i^+(\beta )+t_{i+1}^-(\beta ))$ for all $i$, for a suitably chosen~$\lambda$.
      \[
\|f- \lambda \gamma (\beta)\| \leq  \left[1-  \frac{(1-\beta)\tau  }{1+\tau } \right] \|f\|,\]
{ where }
$\tau = \frac{e}{k(\beta) }\, \left[\frac{(k(\beta )-1)\mu_f(2\beta \, \|f\|)}{2e(b-a)}\right]^{k(\beta )}.$
\end{thm}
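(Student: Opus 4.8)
The plan is to concatenate the estimates established in the discussion preceding the statement, with $\lambda$ taken to be the value $\bar\lambda = \|f\|(1-\beta)(m_-(\beta)+m_+(\beta))^{-1}$ already used in the proof of Theorem~\ref{thm:num}. First I would recall from~\eqref{rho} that with this choice
$$\mu_{opt} \le \|f-\bar\lambda\gamma\| \le \frac{1+\beta\rho(\beta)}{1+\rho(\beta)}\,\|f\|, \qquad \rho(\beta)=\frac{m_-(\beta)}{m_+(\beta)}\in(0,1].$$
The real-variable function $\rho\mapsto(1+\beta\rho)(1+\rho)^{-1}$ has derivative $(\beta-1)(1+\rho)^{-2}\le 0$, hence is non-increasing on $[0,\infty)$; consequently any lower bound $\rho(\beta)\ge\tau$ gives $\|f-\bar\lambda\gamma\|\le(1+\beta\tau)(1+\tau)^{-1}\|f\|$, and the elementary identity $(1+\beta\tau)(1+\tau)^{-1}=1-(1-\beta)\tau(1+\tau)^{-1}$ turns this into the asserted bound.

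It thus remains to prove $\rho(\beta)\ge\tau$ for the midpoint choice $\xi_i=\tfrac12(t_i^+(\beta)+t_{i+1}^-(\beta))$. The numerator of $\rho$ is handled by~\eqref{tfalpha}: since $|f(t_{i+1}^-(\beta))-f(t_i^+(\beta))|=2\beta\|f\|$, uniform continuity yields $t_{i+1}^-(\beta)-t_i^+(\beta)\ge\mu_f(2\beta\|f\|)$, so with $r:=\mu_f(2\beta\|f\|)/2$ the knots $\xi_i$ satisfy the separation and boundary conditions appearing in the definition of $\Gamma_{k(\beta)}$, and any $t\in\bigcup_i[t_i^-(\beta),t_i^+(\beta)]$ satisfies $|t-\xi_i|\ge r$ for every $i$; comparing with that definition gives $m_-(\beta)\ge\Gamma_{k(\beta)}(r)$, and the logarithm--integral estimate carried out above gives $\Gamma_{k(\beta)}(r)\ge\frac{e}{k(\beta)}\bigl[\frac{(k(\beta)-1)r}{e}\bigr]^{k(\beta)}$. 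For the denominator, every factor $|t-\xi_i|\le b-a$, so $m_+(\beta)\le(b-a)^{k(\beta)}$. Dividing the two bounds gives precisely $\rho(\beta)\ge\tau$, and chaining the displayed inequalities completes the argument.

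The derivative computation, the algebraic identity, and the bound on $m_+(\beta)$ are routine. The one step that deserves care --- and the main, if modest, obstacle --- is the verification that the tuple $(t;\xi_0,\dots,\xi_{k(\beta)-1})$ produced when $t$ lies in some $[t_i^-(\beta),t_i^+(\beta)]$ really does satisfy \emph{all} the constraints defining $\Gamma_{k(\beta)}$, namely $a+r\le\xi_0$, $\xi_{k(\beta)-1}+r\le b$, $|\xi_{i+1}-\xi_i|\ge 2r$ and $|t-\xi_i|\ge r$ for all $i$; this is exactly where the inclusion of $[a,b]\cap\bigcup_i[t_i^-(\beta),t_i^+(\beta)]$ in the set $T_f(\beta)$ used in~\eqref{tfalpha}, together with the bounds $a\le t_0^+(\beta)\le\xi_0-r$ and $\xi_{k(\beta)-1}+r\le t_{k(\beta)}^-(\beta)\le b$ recorded before the statement, is needed. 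Once this verification is in hand, the theorem follows immediately; and for $\beta=1$ the bound degenerates to $\|f-\lambda\gamma\|\le\|f\|$, already covered by part~3 of Theorem~\ref{thm:num}.
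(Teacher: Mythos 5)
Your proposal is correct and follows essentially the same route as the paper: you take $\lambda=\bar\lambda$ from inequality~(\ref{rho}), bound $m_-(\beta)$ below via~(\ref{tfalpha}) and the explicit estimate of $\Gamma_{k(\beta)}$ at $r=\mu_f(2\beta\|f\|)/2$, bound $m_+(\beta)$ by $(b-a)^{k(\beta)}$, and conclude by the monotonicity of $\rho\mapsto(1+\beta\rho)(1+\rho)^{-1}$, exactly as in the discussion preceding the theorem. The verification of the separation and boundary constraints for $\Gamma_{k(\beta)}$ that you flag as the delicate step is precisely the content of the displayed inequalities the paper records before stating the theorem, so nothing is missing.
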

This theorem will be the clue for the convergence of algorithms in the next sections.

\section{The Chebyshev alternance theorem}\label{sec:three}
 Let us denote by $\Pi_n$  the set of polynomial functions with degree less than or equal to $n$. 

The problem consists to solve the convex optimization problem
\begin{equation}\label{Tch}
 \min_ {\pi \in \Pi_n} \,\left [\,\| \pi -f \| \,\right].
\end{equation}

 Based on Theorem \ref{thm:alternance}, we present a very short and  original proof  of the celebrated result of Chebyshev on  polynomial  approximation. The intrigant $n+2$ alternate  points condition appears as the conjonction  of the alternance
 propriety on continuous functions with  the dimension of the linear space $\Pi_n$.
 
\begin{thm}[Chebyshev  theorem]\label{thm:tch} Assume that $f:[a,b]\rightarrow \R$ is continuous. Then (\ref{Tch}) has one and only one optimal solution. Furthermore, $\pi \in \Pi_n$
is the optimal solution if and only if there exist $\varepsilon \in \{-1,1\}$, $k\geq n+1$ and $t_0, t_1,\cdots , t_{k}$ such that
$$a\leq t_0<t_1<\cdots <t_n<t_{k}\leq b $$ 
and  $ f(t_i)-\pi(t_i)=\varepsilon \,  (-1)^i \, \|f-\pi\|$ for all~$i$.
\end{thm}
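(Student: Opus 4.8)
The plan is to deduce everything from Theorem~\ref{thm:alternance} applied to the residual $g=f-\pi$, treating existence, sufficiency, necessity and uniqueness in turn. If $f\in\Pi_n$ then $\pi=f$ is trivially the unique minimiser with $\|f-\pi\|=0$ and the alternance condition holds vacuously, so I would assume $f\notin\Pi_n$; then any candidate optimum has $M:=\|f-\pi\|>0$. Existence is the standard compactness argument: $\pi\mapsto\|\pi-f\|$ is continuous and coercive on the finite-dimensional space $\Pi_n$, hence attains its infimum.

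For the sufficiency direction, suppose $\pi\in\Pi_n$ satisfies the alternance condition at $t_0<\dots<t_k$ with $k\ge n+1$, and suppose some $\tilde\pi\in\Pi_n$ had $\|f-\tilde\pi\|<M$. Put $q=\pi-\tilde\pi\in\Pi_n$ and write $q(t_i)=[f(t_i)-\tilde\pi(t_i)]-[f(t_i)-\pi(t_i)]$; the first bracket has modulus $<M$ while the second equals $\varepsilon(-1)^iM$, so $q(t_i)$ has sign $-\varepsilon(-1)^i$, i.e.\ $q$ strictly alternates in sign along $t_0,\dots,t_k$. By the intermediate value theorem $q$ then has at least $k\ge n+1$ distinct zeros, which is impossible for a nonzero element of $\Pi_n$; and $q\equiv 0$ would give $\|f-\tilde\pi\|=M$, also a contradiction. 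This is the step where the ``$n+2$ alternating points'' originates: it is precisely the zero-counting bound for $\Pi_n$.

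For the necessity direction, let $\pi$ be optimal and $g=f-\pi$. I would first observe $\max g=-\min g=M$: since $\Pi_n$ contains the constants, if $\min g>-M$ one could subtract a small positive constant from $g$ (resp.\ add one if $\max g<M$) and strictly decrease $\|g\|$, contradicting optimality. Hence $g$ meets the hypotheses of Theorem~\ref{thm:alternance}, which yields $k'\ge 1$, $\varepsilon\in\{-1,1\}$ and points $t_0^-\le t_0^+<\dots<t_{k'}^-\le t_{k'}^+$ with $g(t_i^-)=g(t_i^+)=\varepsilon(-1)^iM$. If $k'\le n$, then the polynomial $\gamma$ of degree $k'$ furnished by part~3 lies in $\Pi_n$, and for a suitable small $\lambda>0$ we get $\|f-(\pi+\lambda\gamma)\|=\|g-\lambda\gamma\|<\|g\|=M$ with $\pi+\lambda\gamma\in\Pi_n$, contradicting optimality. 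Therefore $k'\ge n+1$, and setting $t_i:=t_i^-$ for $i=0,\dots,k'$ and $k:=k'$ delivers the asserted strictly increasing alternating points with $f(t_i)-\pi(t_i)=\varepsilon(-1)^i\|f-\pi\|$.

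Uniqueness follows by convexity: if $\pi_1,\pi_2$ are both optimal then $\pi=\tfrac12(\pi_1+\pi_2)\in\Pi_n$ is too, and applying the necessity part to $\pi$ gives points $t_0<\dots<t_k$ with $k\ge n+1$ and $\varepsilon(-1)^iM=\tfrac12[f(t_i)-\pi_1(t_i)]+\tfrac12[f(t_i)-\pi_2(t_i)]$; each summand lies in $[-M,M]$, so both must equal $\varepsilon(-1)^iM$, whence $\pi_1=\pi_2$ at $k+1\ge n+2$ points and $\pi_1-\pi_2\in\Pi_n$ must vanish identically. The only genuinely delicate point I anticipate is in the necessity step: confirming that the improving perturbation $\gamma$ from Theorem~\ref{thm:alternance}(3) has degree $\le n$ exactly when $k'\le n$, so that it stays inside $\Pi_n$; the rest is bookkeeping around sign changes and the zero count for polynomials of degree $\le n$.
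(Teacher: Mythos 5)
Your proof is correct, and for existence and necessity it follows the paper's route exactly: boundedness/compactness of the sublevel set in $\Pi_n$, reduction to $\max g=-\min g$ by shifting with a constant, then Theorem~\ref{thm:alternance} applied to the residual, with the degree-$k$ polynomial $\gamma$ of part~3 giving the contradiction when $k\le n$. Where you diverge is the final block: the paper handles sufficiency and uniqueness in a single stroke by comparing a candidate $\pi$ with alternance to an (already existing) optimal $\hat\pi$, obtaining the weak inequalities $\varepsilon(-1)^i[\hat\pi-\pi](t_i)\le 0$ at $k+1\ge n+2$ points and concluding $\hat\pi=\pi$; you instead prove sufficiency by a de la Vall\'ee Poussin--type argument (a strictly better $\tilde\pi$ would force $\pi-\tilde\pi$ to alternate \emph{strictly} in sign, hence have $n+1$ zeros) and uniqueness separately via the midpoint-convexity argument, where the average of two optimal residuals attaining $\pm M$ forces both to attain it. The trade-off: your version is slightly longer but each step is elementary -- the strict sign changes make the intermediate value/zero-count immediate, whereas the paper's weak inequalities require a little more care (zeros at the nodes must be handled, e.g.\ by counting with multiplicity) to conclude $\hat\pi=\pi$; the paper's version is more economical and delivers both conclusions simultaneously. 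Your separate treatment of the degenerate case $f\in\Pi_n$ (so that $M>0$ before invoking Theorem~\ref{thm:alternance}) is a point the paper passes over silently, and is worth keeping.
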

\begin{proof}
    \begin{enumerate}
        \item \textbf{Existence} The function $\gamma  $ defined by $\gamma  (\pi)=\| \pi -f\|$ is convex and continuous on $\Pi_n$. To prove the existence of one optimal solution it is enough to prove that the set $A:=\{\pi \in \Pi_n\,:\, \|\pi-f\|\leq \| f\|\}$ is bounded.
Given $n+1$ arbitrary distinct points $t_1,t_2, \cdots, t_{n+1}\in [a,b]$, 
$\pi \in \Pi_n$ is uniquely defined by the data of $n+1$ values  $\alpha_i$ via  the formula
$$\pi(t)= \sum_{i=1}^{n+1}\alpha_i \prod_{j\neq i}\frac{t-t_j}{t_i-t_j}.$$
If $\pi \in A$, then for all~$i$, 
$$|\alpha _i -f(t_i)| =|\pi(t_i)-f(t_i)|\leq  \|\pi-f\|\leq \| f\|.$$
Hence, $|\alpha _i |\leq  2\, \| f\|$ for all $i$. Thus, $A$ is bounded.

\item \textbf{Necessity}  Let 
$$M= \max \,[\, \pi(t)- f(t): a \leq t \leq b\, ], \quad m=\min\, [\,\pi(t)-f(t): a \leq t \leq b\,].$$

 Let us consider the case where  $M+m\neq 0$.  Set  $\delta (t)=\frac{1}{2}(M+m)$ for all~$t$. Then, $\pi-\delta \in \Pi_n$ and $\| \pi  -\delta -f\| \leq \|\pi -f \| -\frac{1}{2}(M-m)$. Hence, $\pi $ is not an optimal solution. 
 
We are left with the case $M+m=0$. Apply Theorem \ref{thm:alternance} to the function $\pi-f$. Take $t_i=t^-_i$ for $i=0,1, \cdots, k$. Assume that $k<n+1$.  
Consider the function  $\gamma  $ defined in part 2 of the theorem. Then, $\gamma  \in \Pi_n$. We have seen that,  for $\lambda >0$ small enough,
$\|\pi -\lambda \gamma  -f\| <\|\pi-f\|   $ and therefore $\pi $ is not optimal.

\item \textbf{Sufficiency and uniqueness} Next, assume that  $k\geq n+1$. Let $\hat \pi \in \Pi_n$ be  an  optimal solution. We have seen that such an optimal solution exists.  We must prove that $\pi =\hat \pi$. We have
$$\varepsilon (-1)^i [\hat \pi (t_i)-f(t_i)]\leq \|\pi -f\|=\varepsilon (-1)^i[ \pi (t_i)-f(t_i)],\quad i=0,1, \cdots, k.$$
 Hence,
$\varepsilon (-1)^i [\hat \pi -\pi](t_i) \leq 0$ for $i=0,1, \cdots, k$. Since $\hat \pi -\pi \in \Pi_n$ and $k\geq n+1$, this is possible  only if $\pi =\hat \pi$.

    \end{enumerate}
\end{proof}

%
%
%

Theorem~\ref{thm:tch} and its proof are constructive in the sense they allow to determine if some candidate  $\pi\in \Pi_n$  to optimality is optimal and in the opposite case to give a better candidate, but they cannot directly used for designing  algorithms building sequences   converging  to the optimal solution.  
Recall that that  the problem consists in the minimisation of  $\|\sigma -f\|$ subject to $\sigma \in \Pi_n$, $f$ being a fixed continuous function on $[a,b]$. Because it is numerically improbable that an arbitrary  function reaches exactly  its absolute maximum at more than two or three points,  we  shall construct algorithms  converging to a solution satisfying an approximate optimality condition.

Given $\beta \in (0,1)$, let 
$$M(\sigma )=\max_{t\in [a,b]}\,[\sigma (t)-f(t)],  \quad m(\sigma )=\min_{t\in [a,b]}\,[\sigma (t)-f(t)], $$
$$\hat \sigma =\sigma-\frac{1}{2}[M(\sigma )+m(\sigma )], \quad \theta (\sigma)= \|\hat \sigma-f\|=\frac{1}{2}[M(\sigma )-m(\sigma )].$$
The functions $M, -m$ and $\theta $ are convex and defined on the whole space $\Pi_n$.

Next, given $\sigma \in \Pi_n$, let us denote by $k(\beta, \sigma )$ the integer corresponding to  $k(\beta)$ in Theorem \ref{thm:num}  applied to $ \hat \sigma-f$ in  place of $f$.  
 
\begin{pro}\label{pro:neigh}Let $\pi \in \Pi_n$ and $\beta , \beta'$ such that $0<\beta' <\beta <1$. There exists a neighbourhood $V$ of $\pi $ in $\Pi_n$ such that  $k(\beta', \sigma )\geq k(\beta, \pi)$  for all $\sigma \in V$.
\end{pro}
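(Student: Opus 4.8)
The plan is to produce, for every $\sigma$ close enough to $\pi$, an explicit $\beta'$-alternating sequence for $\hat\sigma-f$ with $k(\beta,\pi)+1$ terms, and then to show that the mere existence of such a sequence forces $k(\beta',\sigma)\ge k(\beta,\pi)$. To set up the first part, put $k:=k(\beta,\pi)$, $g_\pi:=\hat\pi-f$ and $\varepsilon:=\varepsilon(\beta)$ for the application of Theorem~\ref{thm:num} to $g_\pi$ with parameter $\beta$ (recall $\max g_\pi=-\min g_\pi=\theta(\pi)>0$). Reading off the relations of part~1 of that theorem at the points $t^-_i(\beta)$ and $t^+_i(\beta)$, the $k+1$ points
$$s_0:=t^-_0(\beta),\ \ s_1:=t^-_1(\beta),\ \ \dots,\ \ s_{k-1}:=t^-_{k-1}(\beta),\ \ s_k:=t^+_k(\beta)$$
are strictly increasing in $[a,b]$ and satisfy $\varepsilon(-1)^i g_\pi(s_i)\ge\beta\,\theta(\pi)$ for $i=0,\dots,k$; that is, $(s_0,\dots,s_k)$ is a $\beta$-alternating sequence for $g_\pi$ with sign $\varepsilon$.

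Next I would transfer this to nearby $\sigma$ by continuity. The maps $\sigma\mapsto M(\sigma)$ and $\sigma\mapsto m(\sigma)$ are $1$-Lipschitz for the norm $\|\cdot\|$, hence $\sigma\mapsto\hat\sigma$, $\sigma\mapsto g_\sigma:=\hat\sigma-f$ and $\sigma\mapsto\theta(\sigma)=\|g_\sigma\|$ are Lipschitz on $\Pi_n$. So there is a neighbourhood $V$ of $\pi$ on which $\theta(\sigma)>0$ and $\|g_\sigma-g_\pi\|\le\dfrac{(\beta-\beta')\,\theta(\pi)}{1+\beta'}$. For $\sigma\in V$ and each $i$, using $\varepsilon(-1)^i g_\pi(s_i)\ge\beta\,\theta(\pi)$, $|(g_\sigma-g_\pi)(s_i)|\le\|g_\sigma-g_\pi\|$, $|\theta(\sigma)-\theta(\pi)|\le\|g_\sigma-g_\pi\|$ and the bound defining $V$,
$$\varepsilon(-1)^i g_\sigma(s_i)\ \ge\ \beta\,\theta(\pi)-\|g_\sigma-g_\pi\|\ \ge\ \beta'\,\theta(\pi)+\beta'\|g_\sigma-g_\pi\|\ \ge\ \beta'\,\theta(\sigma),$$
so $(s_0,\dots,s_k)$ is a $\beta'$-alternating sequence for $g_\sigma=\hat\sigma-f$, again with sign $\varepsilon$.

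It remains to deduce $k(\beta',\sigma)\ge k$ from this, which is the only non-routine step. Apply Theorem~\ref{thm:num} to $g_\sigma$ with parameter $\beta'$ and set $k':=k(\beta',\sigma)$, with associated intervals $I_j:=[t^-_j(\beta'),t^+_j(\beta')]$, $j=0,\dots,k'$. By part~1 of the theorem, $|g_\sigma(t)|<\beta'\,\theta(\sigma)$ for $t\notin\bigcup_j I_j$, while on $I_j$ one has $-\beta'\,\theta(\sigma)<\varepsilon(\beta')(-1)^j g_\sigma(t)$; consequently any point of $I_j$ at which $|g_\sigma|\ge\beta'\,\theta(\sigma)$ has $g_\sigma$ of sign $\varepsilon(\beta')(-1)^j$. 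Since each $s_i$ satisfies $|g_\sigma(s_i)|\ge\beta'\,\theta(\sigma)$, it lies in some $I_{j(i)}$, and as the $I_j$ are pairwise disjoint and increasingly ordered while the $s_i$ are increasing, $j(\cdot)$ is non-decreasing. If $j(i)=j(i+1)$ then $g_\sigma(s_i)$ and $g_\sigma(s_{i+1})$ share the sign $\varepsilon(\beta')(-1)^{j(i)}$, contradicting that consecutive terms of a $\beta'$-alternating sequence have opposite signs. Hence $j(i)<j(i+1)$ for $i=0,\dots,k-1$, so $k'\ge j(k)\ge j(0)+k\ge k$. Thus $k(\beta',\sigma)\ge k=k(\beta,\pi)$ for all $\sigma\in V$, as required.

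The main obstacle is precisely this last step: converting ``there is a long alternating sequence of far points'' into a lower bound on $k(\beta',\sigma)$. It rests on the rigid interval structure supplied by Theorem~\ref{thm:num} — outside a union of $k'+1$ intervals the function stays below the threshold, and inside the $j$-th interval it can reach the threshold only with the fixed sign $\varepsilon(\beta')(-1)^j$ — which simultaneously confines the $s_i$ and forces the interval index to jump strictly at each sign change. The remaining ingredients (selecting the alternating points at $\pi$, Lipschitz continuity of $\sigma\mapsto g_\sigma$ and $\sigma\mapsto\theta(\sigma)$, and the arithmetic defining $V$) are routine.
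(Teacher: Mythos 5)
Your proof is correct and follows essentially the same route as the paper: pick the $k(\beta,\pi)+1$ points where $\hat\pi-f$ alternates at level $\beta$, use continuity (your Lipschitz bounds on $M$, $m$, $\hat\sigma$, $\theta$) to show they remain a $\beta'$-alternating sequence for $\hat\sigma-f$ on a neighbourhood, and conclude $k(\beta',\sigma)\ge k(\beta,\pi)$. The only difference is that you spell out the final implication (an alternating sequence of length $k+1$ forces $k(\beta',\sigma)\ge k$, via the interval structure of Theorem~\ref{thm:num}), which the paper asserts without detail.
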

\begin{proof}There exist $\xi_i\in [a,b]$ and $\varepsilon \in \{-1,1\}$ such that
$$a\leq \xi_0<\xi_1<\cdots <\xi_{k(\beta,\pi)} \leq b \;\; \textrm { and } $$
$$\varepsilon (-1)^{i}( \pi-f)(\xi_i)\geq \beta M(\pi) >\beta' M( \pi) \quad \forall \,i.$$
Due to the continuity of the functions $M$ and $m$, there is  a neighbourhood $V$ of $\pi $ in $\Pi_n$  such that for all $ \sigma \in V$
$$\varepsilon (-1)^{i}(\sigma-f)(\xi_i) >\beta' M  (\sigma )\quad \forall \,i=0,1, \cdots,  k(\beta, \pi).$$
It follows  $k(\beta', \sigma )\geq k(\beta, \pi)$. \end{proof}

 Given $\sigma \in \Pi_n$ and $\beta \in (0,1)$, 
we say that $\sigma \in \Pi_n$  fulfills  the \textbf{$\beta$-alternance  optimality condition} for the problem (\ref{Tch}) if 	$k(\beta, \sigma )\geq n+1$.

\begin{pro} \label{pro:neigh2}  a) If $\bar \pi \in \Pi_n$ is the optimal solution to problem (\ref{Tch})   there is $\bar \beta \in (0,1)$ such that 
$k(\beta ', \bar \pi )\geq n+1$ for all $\beta \in [\bar \beta ,1]$. Furthermore,
for any $\beta \in (\bar \beta ,1]$, there exists  a neighbourhood $V_\beta $ of $\bar \pi$ such that $k(\beta ',  \sigma )\geq n+1$ 
for all $\sigma  \in V_ \beta $ and $\beta '\leq \beta$.

 b) If $\bar \pi \in \Pi_n$ is  not the optimal solution of problem~(\ref{Tch}),   there is $ \beta \in (0,1)$ such that 
$k(\beta ', \bar  \pi )< n+1$ for all  $\beta '\ \in [\beta, 1]$. 
 \end{pro}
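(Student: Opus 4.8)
The plan is to reduce the statement to three ingredients already in hand: the alternance characterisation of optimality (Theorem~\ref{thm:tch}), the stabilisation $k(\beta,\cdot)=k(1,\cdot)$ for $\beta$ near~$1$ together with the monotonicity $k(\beta_1,\cdot)\ge k(\beta_2,\cdot)$ when $\beta_1<\beta_2$ (Proposition~\ref{k(beta)}), and the lower-semicontinuity estimate of Proposition~\ref{pro:neigh}. Throughout I use that $k(\beta,\sigma)$ is the integer $k(\beta)$ of Theorem~\ref{thm:num} applied to $\hat\sigma-f$, which has symmetric extrema $\max(\hat\sigma-f)=-\min(\hat\sigma-f)=\theta(\sigma)$; assuming $f\notin\Pi_n$ one has $\theta(\sigma)>0$ for every $\sigma$, so all these quantities are well defined and Propositions~\ref{k(beta)} and~\ref{pro:neigh} apply to $\hat\sigma-f$. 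A preliminary observation: if $\bar\pi$ is the optimal solution of~(\ref{Tch}) then $M(\bar\pi)+m(\bar\pi)=0$ (otherwise $\bar\pi-\tfrac{1}{2}[M(\bar\pi)+m(\bar\pi)]$ would be strictly better, as in the necessity step of the proof of Theorem~\ref{thm:tch}), so $\hat{\bar\pi}=\bar\pi$ and $k(1,\bar\pi)$ is exactly the alternance integer of $\bar\pi-f$ occurring in Theorem~\ref{thm:tch}; hence $k(1,\bar\pi)\ge n+1$.

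For part~a), I would first feed $k(1,\bar\pi)\ge n+1$ into Proposition~\ref{k(beta)}b) applied to $\bar\pi-f$: this gives $\hat\beta\in(0,1)$ with $k(\beta',\bar\pi)=k(1,\bar\pi)\ge n+1$ for all $\beta'\in(\hat\beta,1]$, and any choice $\bar\beta\in(\hat\beta,1)$ settles the first assertion. For the ``furthermore'' clause, fix $\beta\in(\bar\beta,1)$, insert an auxiliary $\beta^\ast$ with $\beta<\beta^\ast<1$ (so $\beta^\ast\in[\bar\beta,1]$ and $k(\beta^\ast,\bar\pi)\ge n+1$), and apply Proposition~\ref{pro:neigh} to $\bar\pi$ with $(\beta,\beta^\ast)$ in place of its $(\beta',\beta)$: this yields a neighbourhood $V_\beta$ of $\bar\pi$ with $k(\beta,\sigma)\ge k(\beta^\ast,\bar\pi)\ge n+1$ for all $\sigma\in V_\beta$. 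Monotonicity (Proposition~\ref{k(beta)}a)) then gives $k(\beta',\sigma)\ge k(\beta,\sigma)\ge n+1$ for every $\beta'\le\beta$ and $\sigma\in V_\beta$. I note that the borderline value $\beta=1$ is genuinely special — a tiny perturbation of $\bar\pi$ can suppress an extremal point of $\hat{\bar\pi}-f$ and drop $k(1,\cdot)$ — so the claim is really about $\beta<1$.

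For part~b), Theorem~\ref{thm:tch} applied to $\hat{\bar\pi}$ gives that $\hat{\bar\pi}$ is \emph{not} optimal if and only if $k(1,\bar\pi)<n+1$; this is how I read ``$\bar\pi$ is not the optimal solution'' here, the two notions coinciding when $M(\bar\pi)+m(\bar\pi)=0$, i.e. $\hat{\bar\pi}=\bar\pi$. Then Proposition~\ref{k(beta)}b) applied to $\hat{\bar\pi}-f$ produces $\hat\beta\in(0,1)$ with $k(\beta',\bar\pi)=k(1,\bar\pi)<n+1$ for all $\beta'\in(\hat\beta,1]$, and picking $\beta\in(\hat\beta,1)$ and invoking monotonicity (Proposition~\ref{k(beta)}a)) gives $k(\beta',\bar\pi)\le k(\beta,\bar\pi)<n+1$ for all $\beta'\in[\beta,1]$, as desired.

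I expect the only real obstacle to be bookkeeping rather than analysis: making sure Propositions~\ref{k(beta)} and~\ref{pro:neigh} are invoked for $\hat{\bar\pi}-f$ (whose extrema are symmetric and, under $f\notin\Pi_n$, positive) and not for $f$ itself, and correctly orienting the monotonicity of Proposition~\ref{k(beta)}a) against the parameter that Proposition~\ref{pro:neigh} actually controls. Everything analytical has been pushed into those earlier results.
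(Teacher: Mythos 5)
Your proposal is correct and follows essentially the same route as the paper, whose proof of this proposition is just the one-line observation that it follows from Theorem~\ref{thm:tch}, Proposition~\ref{k(beta)} and Proposition~\ref{pro:neigh}; you have simply spelled out how those three results combine. Your side remarks (recentring via $\hat{\bar\pi}$ so that $M+m=0$, and the caveat about the endpoint $\beta=1$ in the ``furthermore'' clause) are accurate readings of subtleties the paper leaves implicit.
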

\begin{proof}The proposition is a consequence of Theorem \ref{thm:tch},  Proposition~\ref{k(beta)} and Proposition~\ref{pro:neigh}. \end{proof}

Based on Proposition~\ref{pro:neigh2}, we introduce the following approximate optimality  condition.  Given $\sigma \in \Pi_n$ and $  \beta \in (0,1)$, 
we say that $\sigma \in \Pi_n$  fulfills  the \textbf{$\beta$-alternance  optimality condition} for the problem (\ref{Tch}) if 	$k( \beta, \sigma )\geq n+1$.

Now, we are ready to propose an  algorithm converging to some $\sigma \in \Pi_n$  fulfilling  this optimality condition. It supposes that we have at our disposition an auxiliary algorithm giving a rather good estimation of the maximum of a continuous function on the closed interval $[a,b]$.

\SetNlSty{textbf}{}{}
\begin{algorithm}[H]
    \caption{2 knots}\label{alg:2knots}
         \DontPrintSemicolon
    
         \KwIn{Together with $\beta^+ ,\beta^- $  with  $0<\beta^- <\beta^+<1$  two fixed  parameters $\gamma^-, \gamma ^+$  such that $0<\gamma^ -\leq 1<\gamma ^+$ are given.}
\Init{
\nl  Start with $\sigma $  defined by $\sigma (t)=\frac{t-a}{b-a}f(b)+\frac{t-b}{a-b}f(a).$\;
\nl  Start with $\beta \in [\beta^-,\beta^+ \, ]$.\;}
\Main{
    \setcounter{AlgoLine}{0}
\nl Compute $M=\max_{t\in [a,b]}\,[\sigma (t)-f(t)]$,
$m=\min_{t\in [a,b]}\,[\sigma (t)-f(t)]$.\;
\nl  Update $\sigma $:  take  $\sigma (t)= \sigma (t)-\frac{M+m}{2}$ for all $t$.\;
\nl \label{step:thmnum} Apply Theorem \ref{thm:num} to $\sigma -f$.\; 
\nl If $k(\beta )\geq n+1$ and $\beta = \beta^+ : $\textbf{ STOP,}  we have found a $\beta^+ $-approximation of the   solution.\;
\nl If $k(\beta )\geq n+1$ and $\beta < \beta^+ $: take  $\beta = \min\,[\, \gamma^+\beta, \beta^+\,]$ and return to~\ref{step:thmnum}.\;
\nl  If $k(\beta )\leq n$:  set   $\xi_i =\frac{t ^+_i(\beta)+t ^-_{i+1}(\beta)}{2}$,  $i=0,1, \cdots , k(\beta )-1.$\\
  Set $\gamma  (t)=  \varepsilon (\beta ) \prod _{i=0}^{k(\beta )-1}\, (\xi_i-t)$.\\
 Choose $\lambda >0$ such that $\| \sigma -f -\lambda \gamma \| < \frac{1+\beta \rho (\beta )}{1+\rho (\beta )} \,\|\sigma -f\|$,\\
such $\lambda $  exist in view of Theorem \ref{thm:num}. \\
  Do $\sigma =\sigma -\lambda \gamma $ and $\beta = \max\, [\,\beta^-, \gamma^-\beta \,] $.  \\
  Go to main step.}
\end{algorithm}
 
\begin{thm}
    The algorithm converges in a finite number of steps to some $\hat \sigma \in \Pi_n$ such that $k( \beta  ^+,\hat\sigma )\geq n+1$. Furthermore, \(\beta ^+\,\| \hat \sigma - f\| \leq \|  \pi_n -f \|\).
\end{thm}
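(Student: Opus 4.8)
The plan is to prove finite termination by exhibiting a \emph{fixed} multiplicative decrease of $\theta(\sigma):=\|\hat\sigma-f\|$ at every iteration that performs the correction step, and then to read the inequality $\beta^+\|\hat\sigma-f\|\le\|\pi_n-f\|$ off part~2 of Theorem~\ref{thm:num}. Write $\pi_n$ for the (unique, by Theorem~\ref{thm:tch}) minimiser of~(\ref{Tch}) and $d_n:=\|\pi_n-f\|$; one may assume $d_n>0$, i.e. $f\notin\Pi_n$, the case $f\in\Pi_n$ being trivial after one recentring. Since every iterate $\hat\sigma$ lies in $\Pi_n$, we always have $\theta(\hat\sigma)=\|\hat\sigma-f\|\ge d_n>0$. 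For transparency I assume the auxiliary maximisation is exact; sufficiently accurate estimates only affect the constants.

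First I would check that each pass through the Main Step is finite. The inner loop formed by step~$3$ and the branch ``$k(\beta)\ge n+1,\ \beta<\beta^+$'' leaves $\sigma$ untouched and merely replaces $\beta$ by $\min(\gamma^+\beta,\beta^+)$; since $\gamma^+>1$ and $\beta\in[\beta^-,\beta^+]$ is an invariant, after at most $\lceil\ln(\beta^+/\beta^-)/\ln\gamma^+\rceil$ such updates one reaches $\beta=\beta^+$, after which step~$3$ forces either termination (step~$4$) or one execution of the correction branch (step~$6$); thus a Main Step that does not terminate ends by performing step~$6$ exactly once and returning. Along the way I would record the monotonicity of $\theta$: recentring does not increase $\|\sigma-f\|$ (because $\tfrac{1}{2}(M(\sigma)-m(\sigma))\le\max(M(\sigma),-m(\sigma))$), the steps that only touch $\beta$ leave $\sigma$ fixed, and in step~$6$ formula~(\ref{rho}), applied to the recentred residual $\sigma-f$, guarantees that $\lambda$ can be taken with $\|\sigma-f-\lambda\gamma\|\le\frac{1+\beta\rho(\beta)}{1+\rho(\beta)}\,\|\sigma-f\|$.

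The heart of the matter is to bound this contraction factor away from $1$ uniformly over all executions of step~$6$, i.e. to produce $\rho_*>0$ with $\rho(\beta)\ge\rho_*$ each time. I would argue as follows. (i) Every iterate $\hat\sigma$ stays in the compact sublevel set $K=\{\pi\in\Pi_n:\|\pi-f\|\le\theta(\hat\sigma^{(0)})\}$; because all norms on the finite-dimensional space $\Pi_n$ are equivalent (or by Markov's inequality) the polynomials of $K$ share a common Lipschitz constant $L$ on $[a,b]$. (ii) Hence, by uniform continuity of $f$, there is $\mu_*>0$ with $\mu_g(\delta)\ge\mu_*$ for every residual $g=\hat\sigma-f$, $\hat\sigma\in K$, and every $\delta\ge 2\beta^-d_n$; since in step~$6$ one has $2\beta\|g\|\ge 2\beta^-d_n$, the relevant value $\mu_g(2\beta\|g\|)$ is $\ge\mu_*$. (iii) Because step~$6$ places the $\xi_i$ at the midpoints, estimate~(\ref{tfalpha}) together with $\Gamma_k(r)=c_kr^k\ge r^k$ (the constants $c_k$ being $\ge1$) and $1\le k(\beta)\le n$ gives $m_-(\beta)\ge(\mu_*/2)^{k(\beta)}\ge\min\!\big((\mu_*/2)^n,\,\mu_*/2\big)$, while $m_+(\beta)\le(b-a)^{k(\beta)}\le\max_{1\le k\le n}(b-a)^k$; dividing yields $\rho(\beta)=m_-(\beta)/m_+(\beta)\ge\rho_*>0$, with $\rho_*$ depending only on $\beta^-,d_n,n,b-a,\theta(\hat\sigma^{(0)})$ and the modulus of continuity of $f$. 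Since $\beta\le\beta^+<1$ and $\rho\mapsto\frac{1+\beta^+\rho}{1+\rho}$ decreases, the contraction factor never exceeds $q:=\frac{1+\beta^+\rho_*}{1+\rho_*}<1$.

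Finally I would conclude. Were the algorithm to run forever it would perform step~$6$ infinitely often, so $\theta(\hat\sigma)$ would drop below $q^{\,j}\theta(\hat\sigma^{(0)})\to 0$, contradicting $\theta(\hat\sigma)\ge d_n>0$; hence it halts at step~$4$, where the current recentred $\hat\sigma$ satisfies $k(\beta^+,\hat\sigma)\ge n+1$. Applying part~2 of Theorem~\ref{thm:num} to $\hat\sigma-f$ (whose maximum equals minus its minimum, both positive) with the polynomial $p=\hat\sigma-\pi_n\in\Pi_n$, admissible because $\deg p\le n\le k(\beta^+,\hat\sigma)-1$, gives $\|\pi_n-f\|=\|(\hat\sigma-f)-p\|\ge\beta^+\|\hat\sigma-f\|$, the asserted bound. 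The main obstacle is steps~(ii)--(iii): the contraction parameter $\rho(\beta)$ is built from the \emph{inverse modulus of continuity of the current residual} $\hat\sigma-f$, which must be kept from degenerating along the iterations; this is exactly where finite-dimensionality of $\Pi_n$ (a uniform Lipschitz bound on $K$) and the strict positivity $d_n>0$ are used.
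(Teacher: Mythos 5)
Your treatment of the main case is correct and is essentially the paper's own argument: uniform equi-Lipschitz behaviour of the polynomials in the sublevel set, hence a uniform positive lower bound on the inverse modulus of continuity of the residuals at levels $\delta\ge 2\beta^-\|\pi_n-f\|$, hence via Theorem~\ref{rate} a contraction factor bounded away from $1$ at every execution of step~6, which is incompatible with the residual norms staying above $\|\pi_n-f\|>0$; and the final inequality $\beta^+\|\hat\sigma-f\|\le\|\pi_n-f\|$ is obtained exactly as in the paper from part~2 of Theorem~\ref{thm:num} with $p=\hat\sigma-\pi_n$. Your explicit verification that the $\beta$-increasing inner loop terminates after at most $\lceil\ln(\beta^+/\beta^-)/\ln\gamma^+\rceil$ updates is a small useful addition that the paper leaves implicit.

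The genuine gap is your opening dismissal of the case $f\in\Pi_n$ as ``trivial after one recentring''. It is not: recentring the initial interpolant does not yield $f$ in general, and when $f\in\Pi_n$ every iterate's residual $\hat\sigma-f$ is itself a nonzero element of $\Pi_n$ (until it vanishes identically), and a nonzero polynomial of degree at most $n$ cannot alternate in sign at $n+2$ points; so the stopping test $k(\beta^+,\hat\sigma)\ge n+1$ cannot be triggered by the mechanism you use, and your contradiction argument collapses because both the lower bound $\theta(\hat\sigma)\ge d_n$ and the bound $\mu_g(\delta)\ge\mu_*$ (which needs $\delta\ge2\beta^-d_n>0$) degenerate. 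This is precisely the situation the paper treats separately: in its proof the limit $\alpha=\lim_l\|f-\sigma^l\|$ is introduced, the case $\alpha>0$ corresponds to your argument, and the case $\alpha=0$ (equivalently $f=\pi_n$) is handled by showing the iterates converge to $\pi_n$ and invoking Proposition~\ref{pro:neigh2} to force the stopping criterion in a neighbourhood of the optimum. You must either supply an argument of that kind for $d_n=0$ or explicitly restrict the claim to $f\notin\Pi_n$; as written, the degenerate case is simply unproved.
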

\begin{proof} 
    The second part of the theorem follows from the first, and part 2) of Theorem~\ref{thm:num}.
    Assume  for contradiction that the algorithm does not stop in a finite number of iterations. Let us denote by $\pi _n$ the unique optimal solution of problem (\ref{Tch}) and by $\sigma ^l$ the polynomial function at the $l$-th iteration after substep 6. of main step.  By construction,
 $$0\leq  \| \pi_n -f  \|  \leq   \| f-\sigma ^{l+1}  \| <  \|f- \sigma ^l \| \quad \forall \, l.$$
 Set $\alpha = \lim_l  \|f- \sigma ^l \|$.

  The function $\sigma \rightarrow \| \sigma -f\| $ is convex and continuous, it reaches its minimum on $\Pi_n$ at one unique point. It follows the compactness of the set 
 $$\Sigma =\{ \,\sigma \in \Pi_n \, :\, \|f-\sigma  \|\leq \|f-\sigma ^0 \| \, \}.$$
 Let us define for $\delta >0$
 $$\bar \mu (\delta )=\inf_{\sigma}\,[\,\mu (\delta ,  f-\sigma )\,:\,\sigma \in \Sigma \,].$$
 More explicitly,
$$\bar \mu (\delta )=\inf_{t,s,\sigma}\,[\,|t-s]: s,t\in [a,b], \sigma \in \Sigma, \; |f(t)-\sigma (t)-f(s)+\sigma (s)| \geq \delta \,].$$
Due to the compactness of the sets $[a,b]$ and $\Sigma $, the infimum is reached at some $(\bar s,\bar t)\in [a,b]^2$ and $\bar \sigma \in \Sigma $. Hence,  $\bar \mu (\delta) >0$.

 i) Firstly, consider the case where  $\alpha >0$. Set 
$$\bar \tau =\min_{k\leq n} \, \left [\frac{1}{k}\,\left (\,\frac{k\mu_f(2 \beta ^- \alpha )}{2e(b-a)}\right )^k\,\right]  .$$
Then $\bar \tau >0$. Since $\beta ^- \alpha \leq \beta ^l \|f-\sigma ^l\|$, Theorem \ref{rate} implies
$$ 0<\alpha <\| f-\sigma ^{l+1}  \|  \leq  \left[\,1-  \frac{(1-\beta^+)\bar \tau  }{1+\bar \tau }\, \right]\, \ \| f-\sigma ^{l}  \| \quad \forall \, l $$
which is not possible.

  ii) It remains to consider the case where $\alpha =0$. Then  $f=\pi_n$ and  the whole sequence $\{\sigma ^l\}$  converges to the optimal solution $\bar \pi= \pi_n$  of  problem (\ref{Tch}). 
 
 Let $\bar \beta $ as in a) of  Proposition \ref{pro:neigh2}. Set $\hat \beta =\max\,[\, \hat \beta, \beta ^+\,]$ and let 
$V$ neighbourhood  of $ \pi_n $ such that $k(\beta', \sigma )\geq n+1$ for all $ \sigma \in V$ and $\beta'\leq \hat \beta $. Since   
$\sigma ^l$  goes  to $\pi_n$ when $l$ goes to $+\infty$, there is a finite integer $ l_0$ such that $\sigma^{l_0}\in V$ and thereby   $k(\beta^{l_0},   \sigma ^{l^0})\geq n+1$. After a finite number of iterations of item 5   where $\sigma $ remains unchanged,
we are in situation 4. The algorithm stops.  \end{proof}

 \section{Spline approximation with $p+2$ fixed knots}\label{sec:four}
In this section, we are given  $p+1$ integers $n_i\geq 1$, $i=0,\cdots , p$   and $p+2$ points $x_i\in [a,b]$ such that 
 $$a=x_0<x_1<x_2< \cdots <x_{p}<x_{p+1} = b .$$
  
We define  $\Sigma $ as the set of functions $\sigma $ on $[a,b]$  such that  for $i=0, \cdots ,p$ there exist $\sigma _i\in \Pi_{n_i}$ such that
$$\sigma(t)=\sigma_i(t)\;\;\;\forall \,t \in I_i:=[x_{i},x_{i+1}] ,\quad \sigma_i(x_{i+1})=\sigma_{i+1}(x_{i+1}),~i=0,\dots,p.$$

The functions $\sigma \in \Sigma $ are called splines, the points $x_i$ are called knots. $\Sigma $ is a linear space with dimension $n_0+n_1+\cdots +n_p$.  We use the following notation $\sigma=(\sigma_0,\sigma_1,\cdots , \sigma_p)$.

We are concerned with the convex optimization problem
\begin{equation}\label{fixed}
\min_{\sigma \in \Sigma  } \| \sigma -f \|,
\end{equation}
where $f$ is continuous on $[a,b]$.

Using a similar argument as in section 2,  it is not difficult to see that the set $A=\{\sigma \in \Sigma \,:\, \| \sigma -f \| \leq \|f\| \, \}$ is bounded. Hence, the problem has at least one optimal solution.

Set $\| \sigma -f \|_k=\max_{t\in I_k}| \sigma (t)-f(t)| =\max_{t\in I_k}| \sigma _k(t) -f(t) |$. Without loss of coherence, we write 
$\| \sigma  -f \|_k=\|\sigma _k -f \|_k$. Then, $\|\sigma -f \| =\max_k\| \sigma -f \|_k$.

\vspace{0.2cm}
Let $\sigma \in \Sigma $. Set
$$M:=\max_{t\in [a,b]}[\sigma (t)-f(t)], \quad   m:=\min_{t\in [a,b]}[\sigma (t)-f(t)].$$ 

It is clear that $M+m=0$ is a necessary condition for $\sigma $ to be an optimal spline.   To see that, take $\delta (t)=\frac{1}{2}(M+m)$ for all $t$. Then,   $\sigma -\delta\in \Sigma $ and 
 $\| \sigma -\delta -f\| \leq \|\sigma -f \| -\frac{1}{2}(M-m)$.

\subsection{A sufficient  condition for optimality} 

 Assume that $M+m=0$ and $\sigma \in \Sigma $ is not optimal. Then,   there is some $\delta \in \Sigma $ such that 
$ \|\sigma +\delta -f \|< \|\sigma -f \|$.   Then $\varepsilon (-1)^j \delta (t^-_j)<0$  and $\varepsilon (-1)^j \delta (t^+_j)<0$,  for all $ j=0,1, \cdots , k$ where $k$ and the points $t^-_j$ and $t^+_j$ are obtained via Theorem \ref{thm:alternance} applied  to the continuous function $\sigma  -f$.

 Without loss of generality, $ \delta $ can be taken so that $\delta (x_i)\neq 0$ for all $i$.  If not, add to $\delta $ a constant function    $\iota $ such that  $(\delta+\iota)(x_i) \neq 0$ for all $i$ and small enough to have  $ \|\sigma +\delta + \iota -f \|< \|\sigma -f \|$. 
  
  \vspace{0.2cm}
  
 The number of roots of the equation $\delta (t)=0$ in the interval  $ [t^+_{j-1},t^-_{j}]$ is a strictly positive odd number.  The number of roots contained in the interval $ [t^-_{j},t^+_{j}]$ is even, possibly $0$. More generally, the number of roots  of the equation   $\delta (t )=0$  in the interval $[t^-_{j},t^+_{j+l}]$  is of the form $l+2m$ with $m\geq 0$  integer.   
 
 Given $i_1,i_2$ with $0\leq i_1< i_2\leq p$, set 
 $$J(i_1,i_2)=\{\,j\,:\,t^+_j \textrm { and/or  }t^-_{j}\in [x_{i_1},x_{i_2}\,]\}.$$
  It follows that the number of roots  of the equation $\delta (t)=0$  in the interval $ [x_{i_1},x_{i_2}]$ is at least $\card (J(i_1,i_2))-1$.
  
Recall that $\delta (x_i)\neq 0$ for all $i$  and, on each interval  $ [x_{i},x_{i+1}]$, $\delta $ is a polynomial function $\delta_i$ with degree less or equal to $n_i$. Therefore, the total number of roots of the equation $\delta (t)=0$  in the interval $ [x_{i_1},x_{i_2}]$ is at most~$n_{i_1}+n_{i_1+1}+ \cdots +n_{i_2-1}$. Hence, for the existence of $\delta$, it is necessary that for all $i_1,i_2$ such that $i_1<i_2$  the following holds 
$$\card (J(i_1,i_2))-1\leq n_{i_1}+n_{i_1+1}+ \cdots +n_{i_2-1}.$$

We have proved the following two propositions.
\begin{pro}\label{suff}Assume $M+m=0$.   
A sufficient condition for $\sigma  $ to be  optimal  is  
$$ \exists  \, i_1,i_2, \; i_1 <  i_2 ,\; \textrm{ such that } \; \card(J(i_1,i_2))\geq n_{i_1}+ n_{i_1+1}+ \cdots +n_{i_2-1}  +2.\eqno{(\textbf{CS})}$$
\end{pro}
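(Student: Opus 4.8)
The plan is to prove the contrapositive: assuming that $\sigma$ is \emph{not} optimal, I will show that condition (\textbf{CS}) must fail, i.e. that $\card(J(i_1,i_2))\leq n_{i_1}+n_{i_1+1}+\cdots+n_{i_2-1}+1$ for every pair $i_1<i_2$. If $\sigma=f$ then $\|\sigma-f\|=0$ and $\sigma$ is trivially optimal, so I may assume $\|\sigma-f\|>0$; together with the hypothesis $M+m=0$ this places the continuous function $\sigma-f$ in the setting of Theorem~\ref{thm:alternance}, with $\max(\sigma-f)=-\min(\sigma-f)=\|\sigma-f\|>0$.

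First I would use non-optimality to produce $\delta\in\Sigma$ with $\|\sigma+\delta-f\|<\|\sigma-f\|$, and apply Theorem~\ref{thm:alternance} to $\sigma-f$ to obtain $k\geq1$, $\varepsilon\in\{-1,1\}$ and points $t_j^-\leq t_j^+$, $j=0,\dots,k$, at which $\varepsilon(-1)^j(\sigma-f)$ attains the value $\|\sigma-f\|$. Evaluating the strict inequality $\|\sigma+\delta-f\|<\|\sigma-f\|$ at these $2(k+1)$ points forces $\varepsilon(-1)^j\delta(t_j^-)<0$ and $\varepsilon(-1)^j\delta(t_j^+)<0$ for all $j$. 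Next I would normalise $\delta$ so that $\delta(x_i)\neq0$ at every knot $x_i$: replacing $\delta$ by $\delta+\iota$ for a constant function $\iota$ of sufficiently small magnitude keeps $\|\sigma+\delta+\iota-f\|<\|\sigma-f\|$ and keeps all the strict sign inequalities above, while making $\delta$ nonvanishing at the knots.

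Then comes the root-counting step, which is the core of the argument. Since the sign of $\delta$ is $-\varepsilon(-1)^j$ at both ends of $[t_j^-,t_j^+]$, it flips between $[t_{j-1}^-,t_{j-1}^+]$ and $[t_j^-,t_j^+]$; by continuity $\delta$ has an odd (hence nonzero) number of zeros in each gap $(t_{j-1}^+,t_j^-)$ and an even number in each $[t_j^-,t_j^+]$. Consequently, for $0\leq i_1<i_2\leq p$, the function $\delta$ has at least $\card(J(i_1,i_2))-1$ zeros in $[x_{i_1},x_{i_2}]$. On the other hand $\delta$ coincides on each $[x_i,x_{i+1}]$ with a polynomial $\delta_i$ of degree at most $n_i$, and since $\delta(x_i)\neq0$ no zero is shared between adjacent pieces, so the total number of zeros of $\delta$ in $[x_{i_1},x_{i_2}]$ is at most $n_{i_1}+n_{i_1+1}+\cdots+n_{i_2-1}$. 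Combining the two bounds gives $\card(J(i_1,i_2))-1\leq n_{i_1}+\cdots+n_{i_2-1}$ for every $i_1<i_2$, which is precisely the negation of (\textbf{CS}); this proves the contrapositive, hence the proposition.

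I expect the only delicate points to be (i) the perturbation step, where one must check that shrinking $\iota$ simultaneously preserves the norm inequality and the strict sign conditions at the $t_j^\pm$, and (ii) the bookkeeping that converts the "odd in each gap, even in each alternance interval" parity statement into the clean lower bound $\card(J(i_1,i_2))-1$, where care is needed with the $t_j^\pm$ lying in the two boundary subintervals containing $x_{i_1}$ and $x_{i_2}$. Both are elementary once organised, so no serious obstacle is anticipated.
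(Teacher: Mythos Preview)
Your proposal is correct and follows essentially the same argument as the paper: prove the contrapositive by taking an improving direction $\delta\in\Sigma$, perturb by a small constant so that $\delta(x_i)\neq0$ at every knot, use the forced sign pattern $\varepsilon(-1)^j\delta(t_j^\pm)<0$ to get a lower bound of $\card(J(i_1,i_2))-1$ zeros in $[x_{i_1},x_{i_2}]$, and compare with the upper bound $n_{i_1}+\cdots+n_{i_2-1}$ coming from the piecewise polynomial structure. The two ``delicate points'' you flag are exactly the places the paper also treats lightly, and your remark that one should first dispose of the case $\sigma=f$ (needed to invoke Theorem~\ref{thm:alternance}) is a small clarification the paper leaves implicit.
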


\begin{pro}\label{suff2}  
Assume that we are given  $q$ points $\xi_i$ such that\\
$a\leq \xi_1 <\xi_2< \cdots <\xi_{q-1}<\xi_q \leq b$. If $q \geq n_0+ n_{1}+ \cdots +n_{p}  +2$, there is no $\delta \in \Sigma $ 
such that $\delta (\xi_i) \delta (\xi _{i+1})<0$ for $i=1, \cdots,  q-1$.
\end{pro}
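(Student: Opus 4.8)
The plan is to prove Proposition~\ref{suff2} by the same root-counting argument that was just used to establish Proposition~\ref{suff}, but now applied to the whole interval $[a,b]$ at once. Suppose, for contradiction, that $\delta\in\Sigma$ exists with $\delta(\xi_i)\delta(\xi_{i+1})<0$ for $i=1,\dots,q-1$. First I would note that since $\delta$ is continuous on $[a,b]$ (it is a spline), the sign change on each interval $(\xi_i,\xi_{i+1})$ forces $\delta$ to vanish at least once there; this gives at least $q-1$ distinct roots of the equation $\delta(t)=0$ inside $(\xi_1,\xi_q)\subset[a,b]$.

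Next I would bound the number of such roots from above. As in the discussion preceding Proposition~\ref{suff}, on each subinterval $[x_i,x_{i+1}]$ the function $\delta$ coincides with a polynomial $\delta_i$ of degree at most $n_i$. There is a mild nuisance: a root could fall exactly on a knot $x_i$, or $\delta_i$ could be identically zero on some piece, so the naive ``at most $n_i$ roots per piece'' count needs a word of care. The clean way is to argue exactly as in the paragraph before Proposition~\ref{suff}: without loss of generality one may assume $\delta(x_i)\neq 0$ for every knot $i$ (otherwise perturb $\delta$ by a small constant $\iota$, which keeps all the strict sign alternations $\delta(\xi_i)\delta(\xi_{i+1})<0$ intact by continuity and keeps $\delta\in\Sigma$), and one may assume no piece $\delta_i$ is identically zero (an identically-zero piece would already prevent a sign change, contradicting the alternation across that piece unless the piece contains no $\xi$'s, in which case we may ignore it). Then the total number of roots of $\delta(t)=0$ in $[a,b]$ is at most $n_0+n_1+\cdots+n_p$.

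Combining the two bounds yields $q-1\le n_0+n_1+\cdots+n_p$, i.e.\ $q\le n_0+\cdots+n_p+1$, which contradicts the hypothesis $q\ge n_0+\cdots+n_p+2$. Hence no such $\delta$ exists.

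The only real obstacle is the bookkeeping around roots landing on knots and the possibility of a vanishing polynomial piece; everything else is immediate from continuity and the degree constraints. Since the excerpt already handled precisely this point in the lead-up to Proposition~\ref{suff} (the ``$\delta(x_i)\neq 0$'' reduction), I would simply invoke that same reduction rather than redo it, so the proof is quite short: reduce to $\delta(x_i)\neq0$ for all $i$, count at least $q-1$ interior sign-change roots, count at most $\sum_i n_i$ roots from the piecewise-polynomial structure, and conclude $q\le \sum_i n_i+1$, contradicting the assumption.
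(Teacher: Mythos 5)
Your argument is correct and is essentially the paper's own: the proposition is obtained there from the same root count (at least $q-1$ sign-change roots versus at most $n_0+\cdots+n_p$ roots for a spline that is nonzero at every knot), using the same reduction by adding a small constant so that $\delta(x_i)\neq 0$ for all $i$. Note only that once this reduction is made your worry about an identically zero piece disappears automatically, since each piece takes a nonzero value at its endpoint knots.
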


\vspace{0.2cm}

\subsection{A sufficient  condition for nonoptimality} 

\begin{pro} \label{gamma}
Assume  $M+m=0$. Let the integer $k$, $\varepsilon $ and the points $t^-_j$ and $t^+_j$  obtained from  Theorem \ref{thm:alternance} applied  to the  function $\sigma  -f$. Assume that  for each $j$  there is      $ \xi_j\in (t^+_{j},t^-_{j+1})$  such that  $\xi_{j}\neq x_i$ for all $i,j$   and the number  $r_i$    of $\xi_j$ belonging to  the interval $[x_{i},x_{i+1}]$ is less than or equal to $n_i$. Then $\sigma $ is not an optimal spline.
\end{pro}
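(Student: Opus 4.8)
The plan is to explicitly construct the perturbation $\delta\in\Sigma$ witnessing non-optimality, exactly as in part 3 of Theorem \ref{thm:alternance}, and then check that it actually lies in $\Sigma$ (i.e. is piecewise polynomial of the allowed degrees with matching values at the knots). First I would set
$$\gamma(t)=\varepsilon\prod_{j=0}^{k-1}(\xi_j-t),$$
which is a single polynomial on all of $[a,b]$. On the interval $I_i=[x_i,x_{i+1}]$ this polynomial has degree equal to $r_i$ (the number of roots $\xi_j$ lying in $I_i$, since the $\xi_j$ are distinct from all $x_i$), which by hypothesis is at most $n_i$. Hence $\gamma$, restricted piecewise, belongs to $\Sigma$; it is automatically continuous across the knots because it is a single polynomial. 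So $\lambda\gamma\in\Sigma$ for every $\lambda$.

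Next I would invoke part 3 of Theorem \ref{thm:alternance} applied to the continuous function $\sigma-f$ (legitimate since $M+m=0$ means $\max(\sigma-f)=-\min(\sigma-f)>0$, unless $\sigma=f$ on $[a,b]$, a trivial case one disposes of separately since then $\sigma$ would already be optimal and also $M=m=0$ forces $k$ and the $t_j^\pm$ to degenerate — actually if $\sigma\equiv f$ the hypotheses of the proposition cannot be met, so $M>0$): for $\lambda>0$ small enough, $\|\sigma-f-\lambda\gamma\|<\|\sigma-f\|$. Since $\sigma-\lambda\gamma\in\Sigma$, this exhibits a spline strictly closer to $f$ than $\sigma$, so $\sigma$ is not optimal.

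The only real content beyond quoting Theorem \ref{thm:alternance} is the degree/continuity bookkeeping that shows $\gamma\in\Sigma$, and this is where I would be most careful: one must use that the $\xi_j$ are strictly interior to the intervals $(t_j^+,t_{j+1}^-)$ (so they are genuine distinct reals in $[a,b]$), that $\xi_j\ne x_i$ for all $i,j$ (so no root sits exactly on a knot, keeping the per-interval degree count honest and $\gamma(x_i)\ne 0$ not even needed here), and the counting hypothesis $r_i\le n_i$. The continuity constraints $\sigma_i(x_{i+1})=\sigma_{i+1}(x_{i+1})$ are free because $\gamma$ is globally a polynomial. I expect no genuine obstacle: the proposition is essentially a corollary of Theorem \ref{thm:alternance} once one observes that the polynomial $\gamma$ from its part 3 happens to have low enough local degree to be a legal spline.
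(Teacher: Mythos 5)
There is a genuine error at the heart of your construction. You claim that the global polynomial $\gamma(t)=\varepsilon\prod_{j=0}^{k-1}(\xi_j-t)$, ``restricted piecewise,'' lies in $\Sigma$ because on $I_i=[x_i,x_{i+1}]$ it ``has degree equal to $r_i$.'' That is false: the restriction of a polynomial to a subinterval is the same polynomial, of the same degree $k=\sum_i r_i$. The hypothesis $r_i\leq n_i$ bounds only the number of roots of $\gamma$ falling in $I_i$, not the degree of $\gamma$ on $I_i$; the factors $(\xi_j-t)$ with $\xi_j\notin I_i$ are still non-constant linear functions on $I_i$. In the relevant situation $k$ typically exceeds every individual $n_i$ (indeed when $\sigma$ is close to optimal one expects $k$ of the order of $n_0+\cdots+n_p+1$), so $\lambda\gamma\notin\Sigma$ and your perturbation is not an admissible spline. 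Consequently the whole argument collapses: you cannot simply quote part 3 of Theorem \ref{thm:alternance}, because the legal perturbations here are splines, not a single polynomial of degree $k$.

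The missing idea, which is exactly what the paper supplies, is to build the perturbation piecewise: on each interval $I_i$ take only the local roots, $\gamma_i(t)=\varepsilon_i\prod_{j\in J(i)}(\xi_j-t)$ (degree $r_i\leq n_i$, empty product equal to $1$), with $\varepsilon_i\in\{-1,1\}$ chosen so that $\gamma_i$ has the same sign as $\gamma$ on $I_i$, and then restore continuity at the knots by positive rescalings: $\delta_0=-\gamma_0$ and $\delta_i=\alpha_i\gamma_i$ with $\alpha_i>0$ chosen so that $\delta_i(x_i)=\delta_{i-1}(x_i)$ (this is where $\xi_j\neq x_i$ matters, so the values at the knots are nonzero). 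The resulting $\delta\in\Sigma$ satisfies $\varepsilon(-1)^j\delta(t)<0$ on each $[t_j^-,t_j^+]$, but since $\delta$ is no longer the polynomial of Theorem \ref{thm:alternance}, the decrease $\|\sigma+\lambda\delta-f\|<\|\sigma-f\|$ for small $\lambda>0$ must be re-proved; the paper does this by a cluster-point contradiction argument (if $t_m$ violated the bound for $\lambda=1/m$, a cluster point $\bar t$ would lie in some $[t_j^-,t_j^+]$, where the sign of $\delta$ forbids it). Your degree/continuity ``bookkeeping'' step is precisely the place where the real work lies, and as written it does not go through.
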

\begin{proof} We build a function $\gamma :[a,b]\rightarrow \R$ and  functions $\gamma_i : [x_i,x_{i+1}]\rightarrow \R$ as follows:
\begin{equation}\label{gammafromxi}
\gamma (t)=\varepsilon \prod_{j=0}^{k-1}\,(\xi_j-t ),\quad \gamma_i(t)=\varepsilon _i\prod_{j\in J(i)}\,(\xi_j-t), \;\; i=0, \cdots, p,
\end{equation}
where $J(i)=\{j \in (x_i,x_{i+1}) \}$,    and, for each $i$,   $\varepsilon _i\in \{-1,1\}$  is chosen so that the signs of $\gamma $ and $\gamma _i$ are the  same  on the interval $I_i=[x_i,x_{i+1}]$.  In case where  $r_i=\card(J(i))=0$, the product is taken equal to $1$.  

Set $\delta_0=-\gamma_0$ and, for $i=1, \cdots ,p$, $\delta_i=\alpha _i \gamma _i$ where $\alpha_i >0$ is taken so that $\delta _i(x_i)=\delta_{i-1}(x_i)$. By construction, $\delta =(\delta_0, \delta _1, \cdots , \delta_p) \in \Sigma$ and 
$$\varepsilon (-1)^{j}\delta (t)>0 \textrm{ if }\;\; \xi_j<t< \xi_{j+1},\; \forall \, j=0,1,\cdots , k,$$
and, in particular,$$\varepsilon (-1)^{j}\delta (t)<0 \textrm{ if }\;\; t^-_{j}\leq t\leq  t^+_{j},\; \forall \, j=0,1,\cdots , k.$$

We shall prove that,  for $\lambda >0$ small enough,  
\begin{equation}\label{inq}
| \sigma (t)+\lambda \delta (t)-f(t)|<M  \quad \forall  \, t\in [a,b].
\end{equation}
Assume,  for contradiction, that for each positive positive integer $m$ there is some $t_m\in [a,b]$ such that 
\begin{equation}\label{contradiction}
| \sigma (t_m)+\frac{1}{m}\delta (t_m)-f(t_m)|\geq M   .
\end{equation}
Let $\bar t$ be a cluster point of the sequence $\{t_m\}$. Then, $| \sigma (\bar t)-f(\bar t)| =  M$ and thereby there is some $j$
such that $\bar t\in [t^-_j,t^+_j]$. For $t$ in  a neighbourhood of $\bar t$ one has $-M<\varepsilon (-1)^j (\sigma (t)-f(t))\leq M$ and 
$\varepsilon  (-1)^{j}\delta (t)<0$. For $m$ large enough, $t_m$ belongs to the neighbourhood  and therefore
$$-M<\varepsilon (-1)^j (\sigma (t_m)+\frac{1}{m}\delta (t_m)-f(t_m))< M,$$ 
in contradiction with (\ref{contradiction}).

Since (\ref{inq}) holds for small $\lambda $, $\sigma $ is not an optimal spline. 
\end{proof}

\vspace{0.2cm}

\subsection{Condition (CS) is  necessary and sufficient   for optimality} 


Now,  we present an algorithm  which,  in case where  condition (CS) does not hold,  builds   points $\xi_j$ which  fulfill the conditions of Proposition
\ref{gamma}.

  Firstly, observe that, in case where  (CS) does not hold, necessarily for all $i$ one has $\card(J(i,i+1))\leq n_i+1$ and, in case where $\card(J(i,i+1))=n_i+1$,  $\card(J(i-1,i))\leq n_{i-1}$  and $\card(J(i+1,i+2))\leq n_{i+1}$.

  \begin{algorithm}[H]
      \caption{Construction  of intermediary points $\xi_j$} \label{alg:itermediary}
      \nlset{Initialisation} Set $r_i=0$ and $J(i)=\emptyset $ for all $i$. \\Set $j=0$.\;
\nlset{Step j}
Let $i$ be  such that $t^+_j\in [x_i,x_{i+1})$.We consider four cases:
\begin{description}
\item[First case:]$x_i\leq t^+_j<t^-_{j+1} \leq x_{i+1}$  and   $r_i<n_i$.\\
 Do  $\xi _{j}=\frac{1}{2}[t^+_j+t^-_{j+1}]$. By construction  $\xi_{j}\in (x_i,x_{i+1})$.\\
Do $r_i=r_i +1$ and  $J(i)=J(i)\cup \{j\}$.\\
If $j=k$ go to End, otherwise do $j=j+1$ and  go to Step $j$.

  \item [Second case:] $x_i\leq t^+_j <x_{i+1}<t^-_{j+1}$ and  $r_i<n_i$.\\  
  Do $\xi _{j}=\frac{1}{2}[t^+_j+x_{i+1}]$. By construction, $\xi_{j}\in (x_i,x_{i+1})$.\\
Do $r_i=r_i +1$ and  $J(i)=J(i)\cup \{j\}$. \\
If $j=k$ go to End, otherwise do $j=j+1$ and  go to Step $j$.
 
\item[Third case:] $x_i\leq t^+_j <x_{i+1}<t^-_{j+1}$ and  $r_i=n_i$.\\  
  Do  $\xi _{j}=\frac{1}{2}[ x_{i+1}+\min(x_{i+2},t^-_{j+1})\, ]$. By construction, $\xi_j \in (x_{i+1},x_{i+2})$.\\
Do $i=i+1$, $r_{i}=1$ and $J(i)=\{\xi_j\}$.\\
If $j=k$ go to End, otherwise do $j=j+1$ and  go to Step $j$.
 
\item[Fourth case:] $x_i\leq t^+_j<t^-_{j+1} \leq x_{i+1}$ and  $r_i=n_i$.\\  \textbf{Stop: (CS)  holds, $\sigma $ is one optimal spline}.
\end{description}\;
\nlset{End}

$\xi_0, \xi_1, \cdots , \xi_k $ fulfilling the requirements of Proposition \ref{gamma} have been obtained. Hence,  \textbf{$\sigma $ is not one optimal spline.}
 
  \end{algorithm}

\begin{proof} It is enough to  prove  that  condition $(CS)$ holds as soon as we encounter the fourth case. We are faced with 
  $i$ and $j$ such that $r_l \leq  n_l$ for all $l< i$,  $r_i=n_i$ and $x_i\leq t^+_j<t^-_{j+1} \leq x_{i+1}$.  Then,
$$x_i<\xi_{j-n_i}<t^-_{j+1-n_i}\leq t^+_{j+1-n_i}<\xi_{j+1-n_i}<\cdots  < \xi_{j-1} <t^-_j\leq t^+_j<t^-_{j+1} \leq x_{i+1}.$$
There are two possibilities:

i)  $x_i\leq t^+_{j-n_i} <\xi _{j-n_i}< t^-_{j+1-n_i}<\cdots  < t^+_j < t^-_{j+1}\leq x_{i+1}$: \\Then $\card (J(i,i+1) )\geq n_i+2$. Hence, condition (CS) holds. 

ii)  $t^+_{j-n_i}<x_i < \xi_{j-n_i}< t^-_{j+1-n_i}< \cdots  < t^+_j < t^-_{j+1}\leq x_{i+1}$:\\
Report to the determination  of $\xi_{j-n_i}$  according to the rules given in the second and third  cases.  Necessarily,  $n_{i-1}=r_{i-1}$.
For simplicity, set $l=j-n_i$, one has
$$x_{i-1} <\xi_{l-n_{i-1}}<t^-_{j+1-n_i}<\cdots  < \xi_{l-1} <t^-_{l} \leq x_{i}.$$
There are two possibilities:

a)  $x_{i-1}\leq t^+_{l-n_{i-1}} <\xi _{l-n_{i-1}}< t^-_{j+1-n_i}<\cdots  <t^-_{l}\leq x_{i}$: \\Then $\card (J(i-1,i+1) )\geq n_{i-1}+n_i+2$. Hence, condition (CS) holds.

 b)  $t^+_{l-n_{i-1}}<x_{i-1} < \xi_{l-n_{i-1}} < \cdots  <  t^-_{l}\leq x_{i}$:\\
 Report to the obtention of $\xi_{l-n_{i-1}}$. Necessarily, $n_{i-2}=r_{i-2}$.
For simplicity, set $m=l-n_{i-1}$, one has
$$x_{i-2} <\xi_{m-n_{i-2}}<\cdots   < \xi_{m-1} <t^-_{m} \leq x_{i-1}.$$
There are two possibilities, etc,  repeat the process as long as necessary.\end{proof}

The following theorem is a consequence of the construction.

\begin{thm}\label{thm:spl} Assume that $f:[a,b]\rightarrow \R$ is continuous and  $\sigma \in \Sigma $ is such that 
$M:= \max \,[\, \sigma (t)-f(t): a \leq t \leq b\, ]=- \min\, [\,\sigma (t)-f(t): a \leq t \leq b\,]>0$.  Then   $\sigma \in \Sigma $ is an optimal solution of   (\ref{fixed})
if and only if (CS) holds. \end{thm}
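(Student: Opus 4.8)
The plan is to derive Theorem~\ref{thm:spl} directly from the material already assembled in this section, treating it as a bookkeeping consequence of Propositions~\ref{suff} and~\ref{gamma} together with the termination analysis of Algorithm~\ref{alg:itermediary}. The statement has two directions. For the easy direction, suppose (CS) holds; then Proposition~\ref{suff} (applied under the standing hypothesis $M+m=0$, which is part of the theorem's assumption) immediately gives that $\sigma$ is optimal, so nothing more is needed. The substance is the converse: if (CS) fails, then $\sigma$ is not optimal.

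To prove the converse I would run Algorithm~\ref{alg:itermediary}. The key structural observation, already noted just before the algorithm, is that when (CS) fails one has $\card(J(i,i+1))\leq n_i+1$ for every $i$, with the adjacent counts controlled whenever equality holds; this is exactly what keeps the four-case dispatch well defined and prevents the ``fourth case'' from being reached spuriously. First I would argue that the algorithm is well defined and terminates: at each Step $j$ the index $j$ strictly increases (or we stop), so after at most $k+1$ steps we reach \textbf{End} or the \textbf{Fourth case}. Next I would invoke the lemma proved right after Algorithm~\ref{alg:itermediary} — that encountering the fourth case forces (CS) to hold — in its contrapositive form: since we are assuming (CS) fails, the fourth case never occurs, so the algorithm must terminate at \textbf{End}. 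At that point the points $\xi_0,\dots,\xi_{k-1}$ produced satisfy $\xi_j\in(t^+_j,t^-_{j+1})$, are distinct from all knots $x_i$ by the explicit midpoint formulas, and the running counters guarantee $r_i=\card(J(i))\leq n_i$ for every $i$ — precisely the hypotheses of Proposition~\ref{gamma}. Applying Proposition~\ref{gamma} then yields that $\sigma$ is not an optimal spline, completing the contrapositive and hence the converse direction.

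The main obstacle, and the only place requiring genuine care rather than quotation, is verifying that the $\xi_j$ emerging from the algorithm really do meet \emph{all} the hypotheses of Proposition~\ref{gamma} simultaneously in every branch — in particular that in the ``third case'', where we jump to the next subinterval because $r_i=n_i$, the chosen $\xi_j=\tfrac12[x_{i+1}+\min(x_{i+2},t^-_{j+1})]$ still lies in $(t^+_j,t^-_{j+1})$ and in the open interval $(x_{i+1},x_{i+2})$, and that resetting $r_{i+1}=1$ keeps the invariant $r_l\le n_l$ intact going forward. This is a finite check across the four cases; it is routine but is where an error would hide. I would also remark that the hypothesis $M+m=0$ is not a loss of generality for the optimality question: as observed at the start of Section~\ref{sec:four}, $M+m=0$ is necessary for optimality, so if $M+m\neq 0$ then $\sigma$ is automatically non-optimal and (CS), read through Proposition~\ref{suff} which itself assumes $M+m=0$, is not claimed; thus the theorem is stated exactly for the relevant regime and no further case analysis is needed.
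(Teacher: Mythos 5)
Your proposal is correct and follows essentially the same route as the paper: the paper presents Theorem~\ref{thm:spl} precisely as ``a consequence of the construction,'' meaning Proposition~\ref{suff} gives the sufficiency of (CS), while the necessity is obtained by running Algorithm~\ref{alg:itermediary}, using the lemma proved after it (fourth case $\Rightarrow$ (CS)) in contrapositive form so that failure of (CS) forces termination at \textbf{End} with points $\xi_j$ meeting the hypotheses of Proposition~\ref{gamma}, hence non-optimality. Your added remark that one must check the $\xi_j$ satisfy all hypotheses of Proposition~\ref{gamma} in every branch is exactly the (routine) verification the paper leaves implicit.
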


\vspace{0.2cm}

%


We are ready for describing a prototype algorithm. As in Algorithm~\ref{algo:construction}, we seek splines satisfying a $\beta$-optimal condition with $\beta \in (0,1)$  close to $1$. 

\begin{algorithm}[H]
     \caption{$p+2$ fixed knots}\label{algo:pplus2}
\Init{
 Start with $\sigma =(\sigma_0,\sigma_1,\cdots, \sigma_{p})$  defined by
 $$\sigma _i(t)=\frac{t-x_i}{x_{i+1}-x_i}f(x_{i+1})+
\frac{t-x_{i+1}}{x_i-x_{i+1}}f(x_i), \quad i=0,1, \cdots , p.$$\;}
\Main{
\nl Find  $M=\max_{t\in [a,b]}[\sigma (t)-f(t)]$, 
$m=\min_{t\in [a,b]}[\sigma (t)-f(t)]$.\;
\nl  Update $\sigma $:  take  $\sigma (t)= \sigma (t)-(M+m)/2$ for all $t$.\;
\nl Apply Theorem \ref{thm:num} to $\sigma -f$. One obtains the quantities $\varepsilon (\beta),\, k(\beta),\, t^-_j(\beta),\,  t^+_j(\beta)$.\;
\nl Apply  Algorithm 2 with  $\varepsilon (\beta),\, k(\beta),\, t^-_j(\beta),\,  t^+_j(\beta)$ in place of 
$\varepsilon,\, k,\, t^-_j,\,  t^+_j$ to obtain intermediary values
 $\xi_j\in ( t^+_j(\beta), t^-_{j+1}(\beta))$.\;
\nl If Algorithm 2 stops at the fourth case of  step $j$ with $k(\beta )<j$:  \textbf{ STOP,}  we have found a  $\beta $-optimal spline.\;
\nl If Algorithm 2 stops at  step $j$ with $k(\beta )=j$, construct functions, $\gamma $, $\gamma _i$, $\delta _i$ and $\delta $ as in 
Proposition \ref{gamma}. Next choose $\lambda >0$ such that $\| \sigma +\lambda \gamma -f\| <   \| \sigma -f\|$. \;
\nl Do $\sigma =\sigma +\lambda \gamma $ and go to main step.\;
}
\end{algorithm}

\section{The free knots spline approximation problem}\label{sec:five}
\subsection{Local properties of the objective function}
In this section, we are given  $p+1$ integers $n_i\geq 1$, $i=0,1,\cdots , p$   and 
$$X=\{ x\in  [a,b]^{n+2} \,:\, a=x_0<x_1<x_2< \cdots <x_{p}<x_{p+1} = b \}.$$

Given $x\in X$,   $\Sigma (x) $ is the set of functions $\sigma $ on $[a,b]$  such that  for each $i=0,1, \cdots ,p-1$ there exists $\sigma _i\in \Pi_{n_i}$ such that
$$\sigma(t)=\sigma_i(t)\;\;\;\forall \,t \in I_i(x):=[x_i,x_{i+1}] ,\quad i=0,\dots,p$$
and 
$$\sigma_i(x_{i+1})=\sigma_{i+1}(x_{i+1}),~i=0,\dots,p-1.$$
Here again, we use the following notation $\sigma=(\sigma_0,\sigma_1,\cdots , \sigma_p)$.

The free knots problem is the minimisation  problem 
\begin{equation}\label{free}
 \min_{x,\sigma } \,[ \,\| \sigma -f \| \,:\, x\in X, \, \sigma \in \Sigma ( x)\,], 
  \end{equation}
  where $f$ is a given continuous function  on $[a,b]$.  Set 
  \begin{equation}\label{theta}
\theta (x) =\min_{\sigma \in \Sigma ( x) } \|  \sigma -f \|. 
 \end{equation}
Then $\theta (x)\leq \|f\|$  for any $x$ since the null function belongs to $\Sigma (x)$. The next proposition is concerned with  the continuity properties of the function $\theta$.

\begin{pro}$\theta $ is locally Lipschitz on $ X$.
\end{pro}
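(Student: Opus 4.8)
The plan is to establish the local Lipschitz continuity of $\theta$ by a two-sided estimate: bounding $|\theta(x) - \theta(y)|$ in terms of $\|x - y\|_\infty$ for $x, y$ close together in $X$. The essential idea is that a spline which is nearly optimal on the knot configuration $x$ can be transported to a spline on the configuration $y$ whose knots have been slightly displaced, with only a controlled change in the uniform error; symmetry in $x$ and $y$ then yields the Lipschitz bound.

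First I would fix a compact subset of $X$ (a neighbourhood of an arbitrary point $\bar x$ whose closure stays inside $X$, so that all interknot gaps $x_{i+1} - x_i$ are bounded below by some $\eta > 0$), since local Lipschitz continuity is a statement about such neighbourhoods. On this compact set, the optimal splines $\sigma^x$ realizing $\theta(x)$ can be taken from a bounded family: mimicking the boundedness argument used in Section~\ref{sec:four} (evaluating at enough points in each subinterval and using a Lagrange-type interpolation formula), one shows $\|\sigma^x\| \le C$ uniformly, and moreover each polynomial piece $\sigma^x_i$ has coefficients bounded uniformly, hence a uniform Lipschitz constant $L$ on $[a,b]$. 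This is the technical heart of the estimate and the step I expect to be the main obstacle: one must control not just the values but the moduli of continuity of the optimal polynomial pieces, uniformly over the knot configuration, and handle the fact that as knots move, a point $t$ may switch from lying in $I_i(x)$ to $I_{i+1}(y)$.

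Next, given $x, y \in X$ close, I would construct from an optimal $\sigma^x = (\sigma^x_0, \dots, \sigma^x_p)$ a competitor $\tilde\sigma \in \Sigma(y)$: on $I_i(y)$ simply take the polynomial $\sigma^x_i$ (the same polynomial functions, now restricted to the shifted intervals). The continuity conditions $\sigma^x_i(x_{i+1}) = \sigma^x_{i+1}(x_{i+1})$ do not immediately give $\sigma^x_i(y_{i+1}) = \sigma^x_{i+1}(y_{i+1})$, so $\tilde\sigma$ must be corrected to restore continuity at the new knots $y_{i+1}$. One way: add to each piece a low-degree correction polynomial whose supremum norm is $O(\|x-y\|_\infty)$, built recursively from the jumps $|\sigma^x_i(y_{i+1}) - \sigma^x_{i+1}(y_{i+1})| \le L |x_{i+1} - y_{i+1}|$; since each $n_i \ge 1$, a degree-one correction on each interval suffices, and the accumulated correction stays $O(\|x - y\|_\infty)$ with constant depending on $p$, $L$, and $\eta$. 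Then $\theta(y) \le \|\tilde\sigma - f\| \le \|\sigma^x - f\| + (\text{correction}) + (\text{change from the } t \text{ near } x_{i+1} \text{ where the active polynomial piece switches})$, and the latter is again $O(L\|x-y\|_\infty)$ because the two pieces agree at $x_{i+1}$ up to $O(\|x-y\|_\infty)$. Thus $\theta(y) - \theta(x) \le K\|x - y\|_\infty$.

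Finally, swapping the roles of $x$ and $y$ gives the reverse inequality, so $|\theta(x) - \theta(y)| \le K \|x - y\|_\infty$ on the chosen neighbourhood, with $K$ depending only on $\bar x$ (through $\eta$), the degrees $n_i$, and $\|f\|$. Since $\bar x \in X$ was arbitrary, $\theta$ is locally Lipschitz on $X$. I would present the boundedness/uniform-Lipschitz lemma for the optimal pieces first as a standalone claim, then the transport construction, then the symmetrization, keeping the constant-tracking at the level of ``depends only on $\eta, p, n_i, \|f\|$'' rather than computing it explicitly.
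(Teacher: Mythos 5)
Your proposal is correct and follows essentially the same route as the paper: a uniform bound (hence a uniform Lipschitz constant) for the optimal polynomial pieces obtained by Lagrange interpolation at nodes that stay interior to the subintervals throughout the neighbourhood, then transporting the optimal spline from the knot vector $x$ to $y$ with small corrections restoring continuity at the shifted knots, and finally symmetrizing in $x$ and $y$. The only cosmetic differences are that the paper restores continuity with cumulative constant shifts (rather than degree-one corrections) and measures knot displacement in the $\ell_1$ norm instead of $\ell_\infty$.
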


\begin{proof}i) Let $\bar x \in X$. Set for $i=0,1, \cdots, p$ and $k=1,2,\cdots, n_i+1$
$$t_i^k=\bar x_i+\frac{k}{n_i+2}\, (\bar x_{i+1}-\bar x_i).$$
We consider a neighbourhood $V$ of $\bar x$ such that  for all $x\in V$
$$ x_i<t^1_i<t_i^{n_i+1}<x_{i+1} \quad  i=0,1,\cdots , p.$$
Let $x\in V$ and $\sigma  =(\sigma_0, \sigma _1, \cdots , \sigma _p) \in \Sigma(x)  $ be an optimal solution  of  (\ref{theta}). We know by Section 4 that such a $\sigma $ exists.
On the interval $[x_i,x_{i+1}]$, $\sigma $ is expressed in a unique way under the form
$$\sigma (t)=\sigma_i(t)=\sum _{k=1}^{n_i+1}\alpha _i^k \prod_{l\neq k}\frac{t-t_i^l}{t_i^k-t_i^l}.$$
Since $\|\sigma -f\|\leq \|f\|$ one has $|\alpha _i^k| \leq 2\,\|f\|$ for all $i,k$.
One deduces that there exists $K$, not depending on $x\in V$ and $i$, such that $|\sigma_i(t)-\sigma_i(t')| \leq K\,  |t-t'|$ for all $t,t'\in [x_i,x_{i+1}]$.

ii) Let again $x\in V$ and  $\sigma  =(\sigma_0, \cdots , \sigma _p)  \in \Sigma (x) $ be an optimal solution  of  (\ref{theta}). Let $y\in V$. We build 
 $\tau  =(\tau_0, \cdots , \tau_p) \in \Sigma (y)$ as follows
\begin{eqnarray*}
\tau_0(t)&=&\sigma_0(t) \qquad \forall \, t\in [y_0,y_1],\\
\tau_1(t)&=&\sigma_1(t)+\tau_0(y_1)-\sigma _1 (y_1) \quad \forall \, t\in [y_1,y_2],\\
\tau_2(t)&=&\sigma_2(t)+\tau_1(y_2)-\sigma  _2(y_2) \quad \forall \, t\in [y_2,y_3],\\
\cdots & & \cdots \cdots \cdots \cdots \cdots \cdots \cdots \cdots \cdots \cdots \cdots \cdots
\end{eqnarray*}
 From what we deduce that for all $t\in [y_i,y_{i+1}]$ one has
 $$\tau_i(t)-\sigma_i(t)=\sum_{k=0}^{i-1}\sigma _k(y_{k+1})-\sigma _{k+1}(y_{k+1}).$$
 
 Recall that  $\sigma _k(x_{k+1})-\sigma _{k+1}(x_{k+1}) =0$ for all $k$. Then, it follows from part i), that there exists $L$ such that 
$| (\tau_i-\sigma_i)(t)| \leq L \, |y_i-x_i |$ for all $t\in [y_i,y_{i+1}]$.   Combining with other intervals one obtains for all $t\in [a,b]$
$$|(\tau  -f)(t)| \leq | (\sigma  -f)(t)| +|( \tau -\sigma )(t)| \leq  | (\sigma  -f)(t)| +L\, \|x-y\|_1,$$
where, for $v\in \R^{p+2}$, $\|v\|_1=\sum_{i=0}^{p+1}|u_i|$.
Hence,
$$\theta (y)\leq \|\tau-f\| \leq \|\sigma-f\| + L \, \|y-x\|_1= \theta (x)+L \, \|y-x\| _1.$$
Finally, permuting the roles playing by $x$ and $y$,
$$|\theta (y)- \theta (x)|\leq L \, \|y-x\| _1 \quad \forall \, x,y\in V.$$ 
Thus,
$\theta $ is Lipschitz on $V$. \end{proof}

 \vspace{0.3cm}

 It is clear that if $(x,\sigma )$ is  a (global) local optimal solution of (\ref{free}), then $\theta (x) =\|\sigma -f\|$.
Since the set $\{(x,\sigma )\, :\,  x\in X, \, \sigma \in \Sigma ( x) \}$  is not convex,
the problem  ($\ref{free}$) is non convex. This leads to  investigate     local optimality.

 \subsection{Local w-minimality}
Through this  subsection,  $x \in X$ and  $\sigma \in \Sigma (x) $   is  such that $\theta (x)=\|\sigma -f\|$.  A point $t$ for which 
$\theta (x) = | \sigma(t)-f(t) |$ is said to be  extreme. 

We are interested in  the existence of local moves    around $x$  which potentially induce a decrease of $\theta $.

 Apply  Theorem \ref{thm:alternance} to the function $\sigma -f$. Let $\varepsilon, k$, $t^-_j, t^+_j$ be  as in the theorem. Apply the algorithm for the construction of points $\xi_j$ given in the last section.

Since $ \sigma $ is optimal, the algorithm stops in the fourth case with  $ i^0\geq 0$  and $j^0$ such that
$$r_{i^0}=n_{i^0}, \quad  x_{i^0}\leq t^+_{j^0}<t^-_{j^0+1}\leq  x_{i^0+1},\quad  r_l\leq n_l \;\; \forall \, l=0,1, \cdots, i^0.$$
 Let $i^-$ be the smallest $i\leq i^0$ such that $r_{k}= n_{k}$ for all $k\in [i,i^0]$.  \\
\noindent Next, let $i^+$ be the greatest $i\geq i^0$ such that 
$$\card(J(l,l+1))\geq  n_l\quad
 \forall \,l=i^0, i^0+1, \cdots, i^+.$$
Then,
$$i^-\leq i^0\leq i^+, \;\;\card(J(l,l+1))\geq n_l \;\,\forall \, l=i^-,i^-+1,\cdots ,i^+,$$
$$\card(J(i^-,i^+ +1))\geq 2+n_{i^-}+n_{i^-+1}+\cdots +n_{i^+} .$$
Let $j^-$ be the smallest $j$ such that  $x_{i^-} \leq t^+_j$. \\
\noindent Let $j^+$ be the greatest $j$ such that  $t^-_j\leq x_{i^++1}$. 
Then,
$$x_{i^-} \leq t^+_{j^-}<t^-_{j^+}\leq x_{i^++1}, \quad j^+\geq  j^-+1+n_{i^-}+n_{i^-+1}+\cdots +n_{i^+}.$$

\begin{pro}
Assume that  $y\in X$ is such that  
$$x_{i^- }\leq y_{i^-}\leq  t^+_{j^-},  \quad t^-_{j^+}\leq y_{i^++1}\leq  x_{i^++1}$$  and 
 $y_i=x_i$ for all $i$ such that $t^+_{j^-}  \leq x_i \leq t^-_{j^+}$.
Then, $\theta (y)\geq \theta(x)$. 
\end{pro}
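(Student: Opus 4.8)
The plan is to show that on the restricted interval $[x_{i^-}, x_{i^++1}]$ nothing we can do to the knots $y$ (subject to the stated constraints) allows us to beat the value $\theta(x)$, because condition (CS) — or rather its local analogue — is forced to hold for the spline restricted to that block. The crucial observation is that the hypotheses on $y$ pin down exactly the knots that matter: the knots $y_i$ for $t^+_{j^-} \le x_i \le t^-_{j^+}$ are frozen equal to $x_i$, while the two ``boundary'' knots $y_{i^-}$ and $y_{i^++1}$ are only allowed to move inside $[x_{i^-}, t^+_{j^-}]$ and $[t^-_{j^+}, x_{i^++1}]$ respectively. Hence the extreme points $t^-_j, t^+_j$ with $j^- \le j \le j^+$, which by construction all lie in $[t^+_{j^-}, t^-_{j^+}] \subseteq$ (interior of the frozen block), remain interior extreme points of the same polynomial pieces for every admissible $y$, and the count $\operatorname{card}(J(i^-, i^+ +1)) \ge 2 + n_{i^-} + \cdots + n_{i^+}$ is preserved verbatim.

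First I would set up the contradiction: suppose $\theta(y) < \theta(x)$ for some admissible $y$, and let $\tau = (\tau_0,\dots,\tau_p) \in \Sigma(y)$ be an optimal spline for $y$, so $\|\tau - f\| = \theta(y) < \theta(x) = \|\sigma - f\|$. Put $\delta = \tau - \sigma$ restricted to the block $[x_{i^-}, x_{i^++1}]$; on each subinterval $[x_i, x_{i+1}]$ with $i^- \le i \le i^+$ this is a difference of two polynomials of degree $\le n_i$ (the pieces of $\sigma$ and $\tau$ agree as ``types'' since $y$ has the same knots as $x$ inside the block), hence itself a polynomial of degree $\le n_i$ there, and $\delta$ is continuous across the interior knots $x_{i^-+1}, \dots, x_{i^+}$ because both $\sigma$ and $\tau$ are. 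Next, exactly as in the argument preceding Proposition~\ref{suff}, at each extreme point $t^-_j$ and $t^+_j$ with $j^- \le j \le j^+$ the strict inequality $\|\tau - f\| < \|\sigma - f\|$ forces $\varepsilon(-1)^j \delta(t^-_j) < 0$ and $\varepsilon(-1)^j \delta(t^+_j) < 0$; after perturbing $\delta$ by a tiny constant (which is still a legal element of the piecewise-polynomial space on the block, since every $n_i \ge 1$) so that $\delta$ does not vanish at the interior knots, one concludes that $\delta$ has at least $\operatorname{card}(J(i^-, i^++1)) - 1 \ge 1 + n_{i^-} + \cdots + n_{i^+}$ sign changes, hence that many roots, in $[x_{i^-}, x_{i^++1}]$. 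But $\delta$ restricted to this block is a piecewise polynomial whose total root count is at most $n_{i^-} + n_{i^-+1} + \cdots + n_{i^+}$, a contradiction.

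The one subtlety — and the step I expect to be the main obstacle — is checking that replacing the knot vector $x$ by $y$ on the two boundary subintervals does not spoil the sign-change / root-counting bookkeeping: the piece of $\tau$ on $[y_{i^-}, y_{i^-+1}]$ lives on a slightly different interval than the piece of $\sigma$ on $[x_{i^-}, x_{i^-+1}]$, and similarly at the right end. This is handled by noting that all the relevant extreme points $t^+_{j^-}, \dots, t^-_{j^+}$ lie in $[\,t^+_{j^-}, t^-_{j^+}\,]$, which is contained in $[y_{i^-+1}, y_{i^+}] = [x_{i^-+1}, x_{i^+}]$ (the frozen part), because $y_{i^-} \le t^+_{j^-} < x_{i^-+1} = y_{i^-+1}$ and $y_{i^+} = x_{i^+} = y_{i^+} \le t^-_{j^+} \le y_{i^++1}$ wait — more precisely $y_{i^-+1} = x_{i^-+1} \le t^+_{j^-}$ is \emph{not} what we want; rather the definition of $j^-$ as the smallest $j$ with $x_{i^-} \le t^+_j$ together with $x_{i^-} \le y_{i^-} \le t^+_{j^-}$ and $t^-_{j^+} \le y_{i^++1} \le x_{i^++1}$ guarantees each $t^\pm_j$ with $j^- \le j \le j^+$ sits strictly between two consecutive \emph{unmoved} knots, so the piece of $\tau$ there is a genuine polynomial of the expected degree and the comparison with $\sigma$ is legitimate. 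Once this is pinned down, the root count goes through and the proof is complete by symmetry (permuting $x$ and $y$ is not even needed here, since we only need one direction).
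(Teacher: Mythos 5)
Your overall strategy (localise the fixed--knot sign-change/root-count argument to the block where the alternation $\card(J(i^-,i^++1))\geq 2+n_{i^-}+\cdots+n_{i^+}$ lives) is the same idea the paper uses, but your execution has a genuine gap: you carry out the root count on the full block $[x_{i^-},x_{i^++1}]$, and on that block $\delta=\tau-\sigma$ is \emph{not} an element of the fixed-knot spline space with knots $x_{i^-},x_{i^-+1},\dots,x_{i^++1}$ and degrees $n_{i^-},\dots,n_{i^+}$. The moved knots $y_{i^-}\in[x_{i^-},t^+_{j^-}]$ and $y_{i^++1}\in[t^-_{j^+},x_{i^++1}]$ lie \emph{inside} this block, so on $[x_{i^-},y_{i^-}]$ the function $\tau$ is the piece $\tau_{i^--1}$ (degree up to $n_{i^--1}$) and on $[y_{i^++1},x_{i^++1}]$ it is $\tau_{i^++1}$; hence your claim that ``$\delta$ restricted to this block is a piecewise polynomial whose total root count is at most $n_{i^-}+\cdots+n_{i^+}$'' is unjustified — the correct bound on that block is $n_{i^--1}+n_{i^-}+\cdots+n_{i^+}+n_{i^++1}$, which is not exceeded by the $\card(J(i^-,i^++1))-1\geq 1+n_{i^-}+\cdots+n_{i^+}$ forced sign changes, so no contradiction follows. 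Your final paragraph senses exactly this problem but does not repair it: the inclusion $[t^+_{j^-},t^-_{j^+}]\subseteq[x_{i^-+1},x_{i^+}]$ is false (you half-retract it mid-sentence), and the statement that each $t^\pm_j$ with $j^-\leq j\leq j^+$ ``sits strictly between two consecutive unmoved knots'' is also false ($t^+_{j^-}$ may lie in $(x_{i^-},x_{i^-+1})$, between the \emph{moved} knot $y_{i^-}$ and the unmoved $x_{i^-+1}$); knowing that each extreme point lies in a piece of the expected degree does not restore a global root count that includes the two intruding end pieces.

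The repair is to shrink the interval of comparison to $[t^+_{j^-},t^-_{j^+}]$, and this is precisely what the paper does. Since by hypothesis $y_{i^-}\leq t^+_{j^-}$ and $t^-_{j^+}\leq y_{i^++1}$ while the intermediate knots are frozen, every $\tau\in\Sigma(y)$ restricts on $[t^+_{j^-},t^-_{j^+}]$ to a spline for the fixed-knot problem with knots $t^+_{j^-},x_{i^-+1},\dots,x_{i^+},t^-_{j^+}$ and degrees $n_{i^-},\dots,n_{i^+}$; all the alternating extreme points indexed by $J(i^-,i^++1)$ lie in this interval, so condition (CS) holds for this restricted problem and Proposition~\ref{suff} (equivalently, your sign-change count now run against the correct bound $n_{i^-}+\cdots+n_{i^+}$, as in Proposition~\ref{suff2}) shows the restriction of $\sigma$ is optimal there. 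Consequently
$\|\tau-f\|\geq\sup_{t\in[t^+_{j^-},t^-_{j^+}]}|\tau(t)-f(t)|\geq\sup_{t\in[t^+_{j^-},t^-_{j^+}]}|\sigma(t)-f(t)|=\theta(x)$
for every $\tau\in\Sigma(y)$, whence $\theta(y)\geq\theta(x)$ directly, with no contradiction argument and no constant-perturbation of $\delta$ needed. With that change of interval your argument becomes correct and essentially coincides with the paper's proof.
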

\begin{proof}Let us consider the fixed knots spline approximation problem on the interval $[t^+_{j^-}  ,t^-_{j^+}]$ with knots
$t^+_{j^-}, x_{i_-+1}, x_{i_-+2}, \cdots , x_i^+, t^-_{j^+}$ and respective degrees $n_{i_-}, n_{i_-+1}, \cdots , n_{i_+}$. Then, due to Proposition \ref{suff}, the restriction of $\sigma $ to the interval is one optimal spline and therefore for any $y\in X$ and $\tau \in \Sigma (y)$
 $$\sup _{t^+_{j^-}\leq t\leq t^-_{j^+}}  |\tau(t)-f(t)| \geq  \sup _{t\in  [t^+_{j^-},t^-_{j^+} ]}| \sigma (t)-f(t)| =\theta (x).$$
 It follows that $\theta (y)\geq \theta (x)$.
 \end{proof}
 
With regard to this proposition, we focus on the knots $x_i$ which belong to the interval $[t^+_{j^-},t^-_{j^+} ]$.  Our strategy  consists in  seeking if a   small move of a given knot $x_i$ can produce a  decrease of  $\card(J(i^-,i^+ +1))$. Only knots $x_i$ such that  $|\sigma (x_i) -f(x_i)| =\theta (x)$ are relevant in this purpose. We shall describe two cases where such a move does work.
 
 \textbf{i) Move on the right.} 
 
 Let $s\in \{-1,1\}$ be such that $\sigma (x_i) -f(x_i) =s\,\theta (x)$. Assume that there exist $\bar \lambda >0$,       $\delta_0 \in \Pi_0, \delta _1 \in \Pi_1, \cdots, \delta _{i-1} \in \Pi_{i-1}$ such that 
  \begin{description}
\item[1.]  $s\, \delta _{i-1}(x_i)>0$,  
\end{description} 
and for $k=0, 1, \cdots, i-1$,
  \begin{description}
 \item[2.] $\theta (x)\geq  | \sigma _k (t)-\lambda \delta_k(t)-f(t) |$ for all  $t\in [x_k,x_{k+1}] $, for all $\lambda \in [0,\bar \lambda]$,
\item[3.] $\delta _k(x_{k+1})=\delta _{k+1}(x_{k+1})$.
\end{description}

Such a situation occurs for  \textbf{$i\leq i_0$}.  Indeed, before stopping at $i_0$,  Algorithm 2 has built intermediary points $\xi_j  \in [a,x_{i_0}]$. Report to the construction of the 
 functions  $\delta _i$ in the proof of  Proposition \ref{gamma}. The functions  $s\delta_0,s \delta_1, \cdots, s\delta _{i-1}$ fulfill   conditions 1,2 and 3. In the next subsection, we shall study in detail the existence of such functions in the general case.

  Let $y\in X$ be such that $y_k=x_k$ for all $k\neq i$. Let $y_i \in (x_i, x_{i+1})$ be close to $x_i$. 
  Let $\lambda \in (0,\bar \lambda ] $.  Set   
  $$ \tau_k = \left\{\begin{array}{ccc}\sigma _k  -\,\lambda \delta _k   &  \textrm{if}& k \leq i-1, \\ \sigma_k & \textrm{if} & k\geq i.\end{array}\right.  $$
  The idea is to take, when this is possible,  $y_i\in (x_i, x_{i+1})$  so that the function $\tau =(\tau_0, \cdots , \tau _p)$ belongs to $\Sigma (y)$. This is the case  if and only if
  $$(\sigma _{i-1}-\, \delta _{i-1})(y_i)= \sigma _i(y_i)$$
Let us introduce 
 $$H(t,\lambda)=(\sigma _{i-1}-\sigma _i)(x_i+t)- \,\lambda \delta _{i-1}(x_i+t).$$
 Then, $H(0,0)=0$.  Assume that $(\sigma_{i-1}-\sigma_i)' (x_i)\neq 0$. The implicit function theorem says that there exists a differentiable function $t(.)$ such that   in a neighbourhood of $0$,  
 
    $$0=H(t(\lambda ), \lambda ), \quad  t'(\lambda )= \frac{\delta _{i-1}(x_i+t(\lambda ))}{ (\sigma_{i-1}-\sigma_i -\lambda \delta _{i-1})'(x_i+t(\lambda ))},$$
 $$t(0)=0, \quad \quad t'(0)=\frac{\,\delta _{i-1}(x_i)}{ (\sigma_{i-1}-\sigma_i )' (x_i)}.$$
 Set $y_i=x_i+t(\lambda) $. Then,    $(\sigma _{i-1}- \delta _{i-1})(y_i)= \sigma _i(y_i)$ and $\tau \in \Sigma (y)$.

A move on the right  of $x_i$ means $t(\lambda ) >0$ which is obtained for small values of $\lambda \in (0,\bar \lambda ]$ under the condition $t'(0)>0$, i.e.,
\begin{description}
\item[4.] $s(\sigma_{i-1}-\sigma_i) '(x_i)>0$. 
\end{description}
 Then, by construction, $x_i<y_i$ and
$$  |\tau (t)-f(t)]  \leq  |\sigma (t)-f(t)| \leq \theta (x) \quad \forall \,t \in [a,b]. $$
Moreover, in view of 1,  there exists $x'_i<x_i$ such that    for $\lambda >0$ small enough 
$$ | \tau (t) -f(t) |< \theta (x)  \quad \forall \, t\in [x'_i,y_i].$$  
Summarizing,
$$\{\,t\in [a,b] \, :\,  |\tau (t)-f(t)]  \leq    \theta (x)\}\subset \{\, t\in [a,b] \, :\,  |\sigma  (t)-f(t)]  \leq    \theta (x)\} ,$$
the inclusion being strict.
Therefore,  $\theta (y)\leq \| \tau -f\| \leq \theta (x)$.
 Unlike $x_i$, the new knot $y_i$ is not an extreme point.
The sequence of alternating extreme points is modified with a possible consequence that    $\theta (y)<\theta (x)$. Anyway,  in the case where  $\theta (y)=\theta (x)$, the number of knots which are extreme has decreased.

 In line with this result, we introduce the following definition of weak local optimal: we say that $\theta $ has  not a local w-minimum at $x\in X$ if for any  neighbourhood $V$ of $x$ there exists $y\in V\cap X$  such that either $\theta (y)< \theta (x)$ or $\theta (y)= \theta (x)$ with a  smaller number of extreme knots.

Therefore the definition of of a local w-minimum is as follows.
\begin{defin}
 $\theta $ has  a local w-minimum at $x\in X$ if for any  neighbourhood $V$ of $x$ there exists no $y\in V\cap X$  such that either $\theta (y)< \theta (x)$  and $\theta (y)= \theta (x)$ with a  smaller number of extreme knots.
\end{defin}

\vspace{0.2cm}
 
  \textbf{ii) Move on the left.} 
 
 Here again, let $s\in \{-1,1\}$ be such that $\sigma (x_i) -f(x_i) =s\,\theta (x)$. Assume that there exist $\bar \lambda >0$,
     $\delta _i \in \Pi_{n_i}, \delta _{i+1} \in \Pi_{n_{i+1}}, \cdots, \delta _{p} \in \Pi_{n_p}$ such that 
  \begin{description}
\item[5.]  $s\, \delta _{i}(x_i)>0$,  
\end{description} 
and for $k=i, i+1, \cdots, p$
  \begin{description}
 \item[6.] $\theta (x)\geq  | \sigma _k (t)- \lambda \delta_k(t)-f(t) |$ for all  $t\in [x_k,x_{k+1}] $, for all $\lambda \in [0,\bar \lambda]$,
\item[7.] $\delta _k(x_{k+1})=\delta _{k+1}(x_{k+1})$.
\end{description}

  Let $y\in X$ be such that $y_k=x_k$ for all $k\neq i$. Let $y_i \in (x_{i-1}, x_{i})$ be close to $x_i$. 
  Let $\lambda \in (0,\bar \lambda ] $.  Set   
  $$ \tau_k = \left\{\begin{array}{ccc}\sigma _k  -\lambda \delta _k   &  \textrm{if}& k \geq i, \\ \sigma_k & \textrm{if} & k< i.\end{array}\right.  $$
In order that   $\tau $ belongs to $\Sigma (y)$, one requires   $$(\sigma _{i}-  \lambda \delta _{i})(y_i)= \sigma _{i-1}(y_i).$$
Let us introduce 
 $$H(t,\lambda)=(\sigma _{i}-\sigma _{i-1})(x_i+t)-\lambda \delta _{i}(x_i+t).$$
In case where $(\sigma_{i-1}-\sigma_i)' (x_i)\neq 0$, the implicit function theorem says that there exists a differentiable function $t(.)$ such that   in a neighbourhood of $0$,  
  $$0=H(t(\lambda ), \lambda ), \quad t'(\lambda )= \frac{\delta _{i}(x_i+t(\lambda ))}{ (\sigma_{i}-\sigma_{i-1} -\lambda \delta _i)'(x_i+t(\lambda ))}, $$
   $$t(0)=0, \quad \quad  t'(0)=\frac{\delta _{i}(x_i)}{ (\sigma_{i}-\sigma_{i-1}) '(x_i)}.$$ 
   
   Set $y_i=x_i+t(\lambda) $. Then,   $\tau \in \Sigma (y)$.   The condition   $t'(0)<0$ is necessary for a move on the left ($y_i<x_i$). This is the case when
    \begin{description}
\item[8.] $s(\sigma_{i-1}-\sigma_i) '(x_i)>0$.
\end{description}
 Then, for $\lambda >0$ small enough,   one obtains   $y_i<x_i $  and $\theta (y)\leq \| \tau -f\| \leq \theta (x)$.  Moreover, there exists
 $x'_i>x_i$ such that  
 $ | \tau (t) -f(t) |< \theta (x)$ for all $t\in [y_i,x'_i]$.  
 Unlike $x_i$, the new knot $y_i$ is not an extreme point. Same consequence as for the left move, $\theta $ has  not a local w-minimum at $x\in X$.
  
 We are ready to establish the main result of this subsection.  
\begin{thm} [Necessary condition for optimality]\label{thm1} 
Let $x\in X$  and  $\sigma \in \Sigma (x)$ be such that $\theta (x)=\| \sigma-f\|$. A necessary condition for  local w-minimality of $\theta $ at $x$ is that at each  knot $x_i$ such that $\theta (x_i)=s( \sigma (x_i)-f(x_i) )$  with $s\in \{-1,1\}$ and $ s (\sigma'_{i-1}(x_i)-\sigma _i'(x_i))>0$    the two following conditions hold:

	i) there are no functions $\delta _k, \, k=0,1,  \cdots , i-1$  fulfilling conditions 1,2,3.
	
	ii) there are no functions $\delta _k, \, k=i, i+1, \cdots ,  p$  fulfilling conditions 5,6,7.
	
\end{thm}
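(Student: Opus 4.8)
The plan is to establish the contrapositive: if $\theta$ admits a local w-minimum at $x$, then at every knot $x_i$ of the stated type the two non-existence statements i) and ii) hold. Equivalently, I fix an (interior) knot $x_i$ with $\theta(x_i)=s(\sigma(x_i)-f(x_i))$, $s\in\{-1,1\}$ and $s(\sigma'_{i-1}(x_i)-\sigma'_i(x_i))>0$, I assume that one of i), ii) fails, and I produce, in every neighbourhood $V$ of $x$, a point $y\in V\cap X$ with $\theta(y)\le\theta(x)$ for which either $\theta(y)<\theta(x)$ or the number of extreme knots is strictly smaller than at $x$; by the definition of local w-minimum this is the desired contradiction. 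The two constructions needed are precisely the ``move on the right'' and the ``move on the left'' analysed in the paragraphs preceding the statement, so the proof reduces to checking that their hypotheses are met and reading off the conclusion.

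Suppose i) fails, so that there are $\bar\lambda>0$ and $\delta_0\in\Pi_0,\dots,\delta_{i-1}\in\Pi_{i-1}$ satisfying conditions 1, 2, 3. The standing hypothesis $s(\sigma'_{i-1}(x_i)-\sigma'_i(x_i))>0$ is exactly condition 4 and in particular gives $(\sigma_{i-1}-\sigma_i)'(x_i)\ne 0$, so the implicit function theorem applies to $H(t,\lambda)=(\sigma_{i-1}-\sigma_i)(x_i+t)-\lambda\delta_{i-1}(x_i+t)$ at $(0,0)$ and yields a differentiable curve $t(\cdot)$ with $t(0)=0$ and $t'(0)=\delta_{i-1}(x_i)/(\sigma_{i-1}-\sigma_i)'(x_i)$; multiplying numerator and denominator by $s$, conditions 1 and 4 give $t'(0)>0$. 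For $\lambda>0$ small set $y_i=x_i+t(\lambda)\in(x_i,x_{i+1})$, $y_k=x_k$ for $k\ne i$, and $\tau=(\tau_0,\dots,\tau_p)$ with $\tau_k=\sigma_k-\lambda\delta_k$ for $k\le i-1$ and $\tau_k=\sigma_k$ for $k\ge i$. Then $H(t(\lambda),\lambda)=0$ and conditions 3 force $\tau\in\Sigma(y)$; conditions 2 on $[a,x_i]$ and the equality $\tau=\sigma$ on $[y_i,b]$ bound $|\tau-f|$ by $\theta(x)$ off a small neighbourhood of $x_i$, where condition 1 (through $\tau(x_i)-f(x_i)=s\theta(x)-\lambda\delta_{i-1}(x_i)$) makes $|\tau-f|$ strictly $<\theta(x)$; hence $\|\tau-f\|\le\theta(x)$, the new knot $y_i$ is not extreme for $(y,\tau)$, and no point that was non-extreme for $(x,\sigma)$ becomes extreme for $(y,\tau)$. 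Therefore $\theta(y)\le\|\tau-f\|\le\theta(x)$, and when equality holds $\tau$ is optimal for $y$ while its extreme knots lie among those of $(x,\sigma)$ with $x_i$ removed and $y_i$ not added, so strictly fewer. Letting $\lambda\to0$ makes $y$ arbitrarily close to $x$, contradicting local w-minimality.

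If instead ii) fails, the same standing hypothesis now plays the role of condition 8, and I repeat the argument with $H(t,\lambda)=(\sigma_i-\sigma_{i-1})(x_i+t)-\lambda\delta_i(x_i+t)$ and the functions $\delta_i\in\Pi_{n_i},\dots,\delta_p\in\Pi_{n_p}$ supplied by the failure of ii). The implicit function theorem gives $t(0)=0$ and $t'(0)=\delta_i(x_i)/(\sigma_i-\sigma_{i-1})'(x_i)$, which is $<0$ by conditions 5 and 8, so $y_i=x_i+t(\lambda)<x_i$ for small $\lambda>0$. Setting $y_k=x_k$ for $k\ne i$, $\tau_k=\sigma_k-\lambda\delta_k$ for $k\ge i$ and $\tau_k=\sigma_k$ for $k<i$, conditions 5, 6, 7 give, exactly as before, $\tau\in\Sigma(y)$, $\|\tau-f\|\le\theta(x)$, and $|\tau-f|<\theta(x)$ on a neighbourhood of $x_i$ meeting $(y_i,x_i]$; so again $\theta(y)\le\theta(x)$, with strictly fewer extreme knots whenever equality holds, contradicting local w-minimality.

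Essentially all the analytic content is already contained in the two perturbation arguments above the statement, so the proof is mostly bookkeeping. The one point that deserves care is the sign calculus: one must check that the single inequality $s(\sigma'_{i-1}(x_i)-\sigma'_i(x_i))>0$ serves simultaneously as the non-degeneracy hypothesis of the implicit function theorem and, via conditions 1 and 4 (resp.\ 5 and 8), pins down the sign of $t'(0)$ so that the perturbed knot $y_i$ genuinely enters the intended subinterval; and one must confirm that $\tau-f$ stays $\le\theta(x)$ on all of $[a,b]$, so that the only change to the list of extreme knots is the replacement of the extreme knot $x_i$ by the non-extreme knot $y_i$. I do not anticipate a serious obstacle; indeed the construction of the $\delta_k$ in the proof of Proposition \ref{gamma} already exhibits, for $i\le i^0$, functions satisfying conditions 1, 2, 3, so in the presence of the derivative condition part i) necessarily fails there, which is the range in which the theorem has content.
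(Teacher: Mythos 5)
Your proof is correct and takes essentially the same route as the paper: Theorem~\ref{thm1} is proved there precisely by the two perturbation constructions (``move on the right'' and ``move on the left'') developed immediately before the statement, and your write-up simply assembles them into the contrapositive with the same sign bookkeeping (conditions 1 and 4, resp.\ 5 and 8, fixing the sign of $t'(0)$ via the implicit function theorem) and the same conclusion that $y_i$ replaces the extreme knot $x_i$ by a non-extreme one, contradicting local w-minimality. No genuine difference or gap to report.
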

 
 We are left with the question of the existence of such functions.

 \subsection{Existence and constructions of functions $\delta_k$}
Let the knot  $x_i$ be such that $\theta  (x)=s(\sigma -f)(x_i)$ with $s\in \{-1,1\}$ and $s(\sigma_{i-1}-\sigma_i) '(x_i)>0$.  
Then, there is some $j$ such that $x_i\in [t^-_j,t^+_j]$.

We shall describe a process which concludes to the existence or the no existence of functions $\delta _k$ such are in the theorem. It is in the same spirit as Algorithm 2.

i)  We start with the existence   of functions $\delta_k, \, k\geq i$.

\begin{algorithm}[H]
    \caption{Step i}
    Let $m_i=\card(J(i,i+1))$. Then $t^-_{j+m_i-1}\leq x_{i+1}<t^-_{j+m_i}$.\;
    \Case{$i=p$. Then $x_{i+1}=x_{p+1}=b$.}{
        \nl \If{$m_p\geq n_p+2$} {\textbf{No existence.}}
        \nl \If{$m_p\leq n_p+1$}{
            Choose  $ \xi_l\in (t^+_{j+l},t^-_{j+l+1})$, $l=0,\cdots, m_i-1$.\;
Take  $\delta _p(t) =s\prod_{l=0}^{m_i-1} (\xi_l-t)$. \;  
The function $ \delta _p$ responds to the question: \textbf{Existence.} }}
\setcounter{AlgoLine}{0}
\Case{$i<p$}{
    \nl \If{$m_i \geq n_i+2$}{\textbf{No existence}}
    \nl    \If{$m_i\leq n_i$ or  ($m_i= n_i +1$ and $x_{i+1}$ is an extreme point)}{
Choose  $ \xi_l\in (t^+_{j+l},t^-_{j+l+1})$, $l=0,\cdots, m_i-2$. \;
Take $\xi_{m_i-1}=x_{i+1}$.\;
 Take $\delta_i(t) =s\prod_{l=0}^{m-1} (\xi_l-t)$ and $\delta _k(t)=0$ for $k\geq i+1$.\;
 The functions $ \delta _k$ respond to the question: \textbf{Existence.}}
\nl In all other cases go to step $i+1$.}
\end{algorithm}

\begin{algorithm*}[H]
    \caption{Step i+1}
One has $m_i=n_i+1$. Let $m_{i+1}=\card(J(i,i+2))$.
Then $t^-_{j-1+m_{i+1}}\leq x_{i+2}<t^-_{j+m_{i+1}}$\;
\Case{$i=p-1$. Then $x_{i+2}=x_{p+1}=b$.}{
    \nl  \If{$m_{i+1}\geq 2+n_i+ n_{i+1}$}{\textbf{No existence.}}
    \nl \If{$m_{i+1}\leq 1+n_i+n_{i+1}$}{
 Choose  $ \xi_l\in (t^+_{j+l},t^-_{j+l+1})\cap (x_i,x_{i+1})$, $l=0,\cdots, n_i-1$,\;
Choose  $ \xi_l\in (t^+_{j+l},t^-_{j+l+1})\cap (x_{i+1},x_{i+2})$, $l=n_i,\cdots, m_{i+1}-1$.\;
Such choices are possible. Next,
take  $\delta _{p-1}(t) =s\prod_{l=0}^{n_i-1} (\xi_l-t)$,  \;
take  $\delta _{p}(t) =\lambda \prod_{l=n_i}^{m_{i+1}-1} (\xi_l-t)$,  \;
with $\lambda $ taken so that $\delta _{p-1}(x_{p}) =\delta _{p}(x_{p})$,\;
The functions $ \delta _{p-1}, \delta_{p}$ respond to the question: \textbf{Existence.}}}
\setcounter{AlgoLine}{0}
\Case{$i<p-1$}{
    \nl \If{$m_{i+1}\geq n_i+n_{i+1}+2$}{\textbf{No existence}}
    \nl    \If{$m_{i+1}\leq n_i+n_{i+1}$ or  ($m_{i+1}= n_i+n_{i+1}
        +1$ and $x_{i+2}$ is an extreme point).}{
Choose  $ \xi_l\in (t^+_{j+l},t^-_{j+l+1})\cap (x_i,x_{i+1})$, $l=0,\cdots, n_i-1$,\;
Choose  $ \xi_l\in (t^+_{j+l},t^-_{j+l+1})\cap (x_{i+1},x_{i+2})$, $l=n_i,\cdots, m_{i+1}-2$\;
Take $\xi_{m_{i+1}-1}=x_{i+2}$\;
Take  $\delta _i(t) =s\prod_{l=0}^{n_i-1} (\xi_l-t)$,  \;
Take  $\delta _{i+1}(t) =\lambda \prod_{l=n_i}^{m_{i+1}-1} (\xi_l-t)$,  
\;
with $\lambda $ taken so that $\delta _i(x_{i+1}) =\delta _{i+1}(x_{i+1})$\;
Take $\delta _k(t)=0$  for $k\geq i+2$\;
The functions $ \delta _k, \, k\geq i $ respond to the question:  \textbf{Existence.} }
\nl In all other cases go to step $i+2$. }
 \end{algorithm*}
 
\begin{algorithm}[H]
    \caption{Step $i+2$}
One has $m_i=n_i+1$,  $m_{i+1}=n_i+n_{i+1}+1$. \; 
Let $m_{i+2}=\card(J(i,i+3))$.  Then $t^-_{j-1+m_{i+2}}\leq x_{i+3}<t^-_{j+m_{i+2}}$\;
 Continue in the same way as above
\end{algorithm}

   ii)  A symmetric process  allows to conclude to the existence or non existence of functions $\delta _k$ fulfilling  conditions 1 to 3.

 \subsection{Another necessary condition for local optimality and connection with existing results}
Combining the last two subsections, we obtain a reformulation of Theorem~\ref{thm1}.
\begin{thm}\label{thm:52} Let $x\in X$  and  $x\in \Sigma (x)$ be such that $\theta (x)=\| \sigma-f\|$. A necessary condition for  local w-minimality of $\theta $ at $x$ is that at if there is at least one knot  $x_j,~i\in I$, such that $\theta (x)= s(\sigma (x_j)-f(x_j) ), \, s\in \{\-1,1\}$ and $ s(\sigma'_{j-1}(x_j)-\sigma _j'(x_j))>0$  then there exists at least one index $i\in I$, such that the  following two conditions hold.
\begin{enumerate}
\item There exists $k>i$ such that $\card (J(i,k)) \geq 2+n_i+\cdots + n_k$  and,  for  all $l$ such that $i<l<k$,
	 $\card (J(i,l)) \geq 1+n_i+\cdots + n_l$. 	
	\item 
	There exists $k<i$ such that $\card (J(k,i)) \geq 2+n_k+\cdots + n_{i-1}$ and,  for  all $l$ such that $k<l<i$, $\card (J(l,i)) \geq 1+n_l+\cdots + n_{i-1}$.
	\end{enumerate}	
\end{thm}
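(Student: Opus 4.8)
The plan is to read Theorem~\ref{thm:52} off Theorem~\ref{thm1}, once the two ``there are no functions $\delta_k$'' conditions of the latter have been translated into the cardinality conditions~1 and~2. Concretely, fix a knot $x_i$ with $\theta(x)=s(\sigma(x_i)-f(x_i))$, $s\in\{-1,1\}$, and $s(\sigma_{i-1}'(x_i)-\sigma_i'(x_i))>0$; the task is to show that condition~(ii) of Theorem~\ref{thm1} (no $\delta_i,\dots,\delta_p$ fulfilling 5,6,7) is equivalent to condition~1, and condition~(i) (no $\delta_0,\dots,\delta_{i-1}$ fulfilling 1,2,3) is equivalent to condition~2. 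Granting this, local w-minimality at $x$ forces, by Theorem~\ref{thm1}, both conditions at \emph{every} knot of the stated type, so as soon as one such knot exists there is an index $i\in I$ — that knot — at which 1 and~2 hold, which is exactly the assertion.

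The first equivalence is what the procedure of the preceding subsection computes. Running Steps $i,i+1,i+2,\dots$ examines the subintervals $[x_i,x_{i+1}],[x_i,x_{i+2}],\dots$ and stops either in an \textbf{Existence} branch, where it writes down $\delta_k(t)=\pm\prod_l(\xi_l-t)$ with intermediate nodes $\xi_l\in(t^+_{j+l},t^-_{j+l+1})$ placed in the appropriate subintervals (one node possibly taken at an extreme interior knot) and leading coefficients chosen so that $\delta=(\delta_i,\dots,\delta_p)$ is continuous at the $x_l$ with $s\delta_i(x_i)>0$; that these $\delta_k$ satisfy 5,6,7 is checked by the perturbation-plus-cluster-point argument already used in the proof of Proposition~\ref{gamma}. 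Otherwise it stops in a \textbf{No existence} branch, which occurs precisely when there is $k>i$ with $\card(J(i,k))$ exceeding the associated degree sum by at least $2$ while each strictly earlier subinterval carries exactly the borderline count and no extreme interior knot, i.e.\ precisely condition~1. That no admissible $(\delta_i,\dots,\delta_p)$ can then exist is the analogue, on $[x_i,x_k]$, of Proposition~\ref{suff2} and of the proof of Proposition~\ref{suff}: condition~6 evaluated at the extreme points $t^\pm_j$ (where $\sigma-f$ is extremal and sign-alternating) together with the continuity condition~7 and the freedom to make $\delta$ nonzero at the non-extreme knots $x_l$ force $\delta$ to change sign between consecutive elements of $J(i,k)$, hence to have more than $n_i+\dots+n_{k-1}$ roots on $[x_i,x_k]$, impossible for a spline of degrees $n_i,\dots,n_{k-1}$ there. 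The second equivalence, between condition~(i) of Theorem~\ref{thm1} and condition~2, is obtained the same way from the symmetric procedure scanning $[x_{i-1},x_i],[x_{i-2},x_i],\dots$.

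I expect the genuine work to lie in the two implications just sketched rather than in the final assembly: carefully counting the roots and sign changes of $\delta$ across the knots $x_l$ of a subinterval, and in particular pinning down the role of an extreme interior knot, which may legitimately serve as one of the nodes $\xi_l$ and hence absorb a sign change — this is exactly why the borderline-equality case lets the procedure continue while the strict-excess case terminates it, and why the inner inequalities in conditions~1 and~2 must be read with that extremality caveat. The index bookkeeping between $\card(J(i,k))$ and the degree sum $n_i+\dots+n_{k-1}$ that arises naturally above, against the $n_i+\dots+n_k$ written in the statement, is a routine off-by-one check, and the passage from the universal quantifier in Theorem~\ref{thm1} to the existential one here is immediate.
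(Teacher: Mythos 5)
Your proposal follows the paper's own route: the paper proves Theorem~\ref{thm:52} simply by ``combining the last two subsections,'' i.e.\ by reading the no-existence conditions (i) and (ii) of Theorem~\ref{thm1} through the stepwise construction of the functions $\delta_k$, exactly as you do, and your handling of the quantifier change and of the degree-sum indexing matches the intended argument. The two caveats you flag --- the off-by-one in $n_i+\cdots+n_k$ versus $n_i+\cdots+n_{k-1}$ and the role of an extreme interior knot in the borderline-equality case --- are genuine features of how the paper's statement must be read, and only the direction ``no $\delta_k$ exist $\Rightarrow$ cardinality conditions'' is actually needed, which is unaffected by them.
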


  Let us place this theorem among the recent  results on the question. In  \cite{Su1,Su4},  a knot $x_i$  such that $\theta (x)= s(\sigma (x_i)-f(x_i) ), \, s\in \{\-1,1\}$ is called unstable  if   $s(\sigma_{i-1}-\sigma_i) '(x_i)>0$, neutral if   $s(\sigma_{i-1}-\sigma_i) '(x_i)=0$ and stable if  $s(\sigma_{i-1}-\sigma_i) '(x_i)<0$.  The formulations of the main theorems in \cite{Su1,Su4,SuU2017} and the present paper are very close except that local optimality is considered in the sense of inf-stationarity in  the sense of Demyanov and Rubinov \cite{dr1,dr2} and in the  local w-minimality in this paper. These two notions are not equivalent, but both aim at describing necessary conditions for optimality. Remark that the situation is a little more general  in \cite{Su4} since the value of the spline may be  fixed at some   knots.
  
  The approach in \cite{Su1,Su4,SuU2017} is analytic since  based on  a minimality criterion   at $x\in X$ using  quasidifferentiability in the sense of Demyanov  \cite{dr1,dr2}. The approach in this paper is  of a constructive type, it is based on the construction of  better  local candidates 
  at minimality. However, the two different approaches together participates to a better understanding of this difficult problem.

It is valuable to note that Theorem~\ref{thm1} and Theorem~\ref{thm:52} do not contain any reference to neutral knots (that is, $s(\sigma_{i-1}-\sigma_i)'(x_i)=0$). At the same time, these knots play an essential role for detecting inf-stationarity~\cite{SuU2017}. Therefore, one of our future research directions is to investigate the connection between neutral knots and the reduction of the number of extreme deviation knots.

\end{document}